\numberwithin{equation}{section}
\setlist[enumerate,1]{label={\rm(\alph*)}, ref={\rm\alph*}}
\newtheorem{theorem}{Theorem}[section]
\newtheorem{proposition}[theorem]{Proposition}
\newtheorem{corollary}[theorem]{Corollary}
\newtheorem{lemma}[theorem]{Lemma}
\newtheorem*{theoremA*}{Theorem A}
\newtheorem*{theoremB*}{Theorem B}
\theoremstyle{remark}
\newtheorem{question}[theorem]{Question}
\newtheorem{remark}[theorem]{Remark}
\newtheorem*{remark*}{Remark}
\renewcommand{\phi}{\varphi}
\renewcommand{\epsilon}{\varepsilon}
\DeclareMathOperator{\Spec}{Spec}
\renewcommand{\hat}{\widehat}
\DeclareMathOperator{\eval}{ev}
\DeclareMathOperator{\ass}{Ass}
\newcommand{\fp}{\mathfrak p}
\newcommand{\fa}{\mathfrak a}
\renewcommand{\[}{\begin{equation*}}
\renewcommand{\]}{\end{equation*}}
\newcommand{\surj}{\twoheadrightarrow}
\newcommand{\lra}{\longrightarrow}
\renewcommand{\compSubAlg}{\, \scalebox{1}{\rotatebox[origin=c]{90}{$\circlearrowleft$} } }
\newcommand{\algCompWithIdeal}[3]{\sC_{#3}(#1, #2 \compSubAlg)}
\newcommand{\algCompWithIdealFull}[2]{\sC(#1, #2 \compSubAlg)}
\newcommand{\restrictedCartAlgSpecifyR}[3]{ {\algCompWithIdeal{#1}{#2}{#3}_{/ {#2}}} }
\newcommand{\restrictedCartAlgSpecifyRFull}[2]{ {\algCompWithIdealFull{#1}{#2}}_{/{#2}} }
\newcommand{\algCompWithI}[1]{\algCompWithIdeal{R}{I}{#1}}
\newcommand{\algCompWithIFull}{\algCompWithIdealFull{R}{I}}
\newcommand{\restrictedCartAlg}[1]{\restrictedCartAlgSpecifyR{R}{I}{#1}}
\newcommand{\restrictedCartAlgFull}{ \restrictedCartAlgSpecifyRFull{R}{I} }
\DeclareMathOperator{\Gr}{Gr}
\DeclareMathOperator{\GL}{GL}
\newcommand{\supth}[1]{\ensuremath{#1^{\mathrm{th}}}}
\title{Diagonal F-splitting and symbolic powers of ideals}
\author{Daniel Smolkin}
\address{University of Michigan,  
  Department of Mathematics,
530 Church St, 
Ann Arbor, MI 48109-1043, USA}
\email{smolkind@umich.edu}
\begin{document}



\maketitle

\begin{prelims}

\DisplayAbstractInEnglish

\bigskip

\DisplayKeyWords

\medskip

\DisplayMSCclass







\end{prelims}


\newpage

\setcounter{tocdepth}{1}

\tableofcontents


\section{Introduction}
Given an ideal $I$ in a ring $R$, we define the $\supth{n}$ \emph{symbolic power} of $I$ to be 
\[
  I^{(n)} = \bigcap_{\fp \in \ass_R(R/I)} I^n R_{\fp} \cap R. 
\]
If $I$ is radical, then one can think of $I^{(n)}$ as roughly being the ideal of elements in  $R$ vanishing to ``order at least $n$'' along the variety $V(I)$. For instance, if $I$ is a divisorial ideal $I = R(D)$, then $I^{(n)} = R(nD)$; see also \cite[Proposition 2.14]{SymbolicPowersSurvey}.  One quickly sees from the definitions that $I^n$ is always contained in $I^{(n)}$, but finding a more precise relationship between ordinary and symbolic powers of ideals is a subtle matter; see \cite{SymbolicPowersSurvey, GrifoSeceleanuSymbolicRees} for excellent overviews of this problem and its history.

Here we address the following question posed in \cite{HunekeKatzValidashtiUniformEquivIsolatedSing}.  

\begin{question}
  Let $(R, \mathfrak m)$ be  complete local domain. Does there exist an integer $C \geq 1$ such that $\fp^{(Cn)} \subseteq \fp^n$ for all $\fp \in \Spec R$ and all integers $n \geq 1$?
\end{question}

So far we know this is true, under mild assumptions,  for isolated singularities (see \cite{HunekeKatzValidashtiUniformEquivIsolatedSing}), Segre products of polynomial rings (see \cite{USTPDiagFReg}), and certain Hibi rings (see \cite{HibiDFR}). We also know that this property descends under finite extensions of rings (see \cite{HunekeKatzValidshtiUSTPFiniteExtensions}). See also Walker's work on monomial ideals and height 1 primes in toric rings \cite{WalkerUniformSymbolicNormalToric}. 

Rings with an affirmative answer to this question are said to have the \emph{uniform symbolic topology property}, or USTP for short. This is because one can use the symbolic powers of an ideal $\fp$ to define a topology on $R$, which is generated by cosets $x + \fp^{(n)}$ for $x \in R$ and $n \geq 1$. Compare this to the $\fp$-adic topology, which is generated by the cosets $x + \fp^n$. Schenzel showed that these two topologies are equivalent, meaning 
\[
  \forall a \in \bN\colon\quad \exists b \in \bN\colon \fp^{(b)} \subseteq \fp^a
\]
whenever $R$ is a Noetherian domain; see \cite{SchenzelSymbolicPowersTopology}. Swanson later showed that, in this case, one can find a number $C(\fp)$, depending on $\fp$, where $\fp^{(C(\fp)n)} \subseteq \fp^n$ for all $n$; see \cite{SwansonLinearEquivalence}. In this sense, the two topologies are said to be \emph{linearly equivalent}. Ein, Lazarsfeld, and Smith kicked off the study of USTP when they showed that one can find a \emph{uniform} $C$ such that $\fp^{(Cn)} \subseteq \fp^n$ for all $\fp \in \Spec R$ and all $n \geq 1$ whenever $R$ is a regular finitely generated $\bC$-algebra; see \cite{ELSsymbolicPowers}. Analogous results were later shown in positive characteristic by Hochster--Huneke \cite{HHsymbolic} and in mixed characteristic by Ma--Schwede \cite{MaSchwedePerfectoid}.

Test ideals of pairs, and their characteristic zero/mixed characteristic analogs, have been an important tool in the study of USTP; see \cite{ELSsymbolicPowers, HaraYoshidaSubadd, TakagiSingMultIdeals, USTPDiagFReg,  MaSchwedePerfectoid, MurayamaSymbolic}. Each of those proofs relies on some version of the ``subadditivity formula,'' 
  \[
    \tau(\mathfrak a^s \mathfrak b^t) \subseteq \tau(\mathfrak a^s) \tau( \mathfrak b^t).
  \] 
  In regular rings, this formula holds as written, but in other situations one must account for its failure. In \cite{TakagiSingMultIdeals}, Takagi multiplies by the Jacobian ideal. In \cite{USTPDiagFReg}, we use so-called ``diagonal Cartier algebras'' and show that a ring has USTP if it is \emph{diagonally $F$-regular}. Diagonal $F$-regularity is a strengthening of \emph{diagonal $F$-splitting} (and of strong $F$-regularity), analogously to the way strong $F$-regularity is a strengthening of $F$-splitting. So far, however, interesting examples of diagonally $F$-regular rings have been quite difficult to find. 

  In this work, we prove that if a ring is only strongly $F$-regular and diagonally $F$-split, then it satisfies a weaker form of subadditivity which is enough for USTP. Namely, we show  the following. 

\begin{theoremA*}
  Let $\kay$ be an $F$-finite field of positive characteristic, and let $R$ be a strongly $F$-regular, diagonally $F$-split $\kay$-algebra essentially of finite type. Let $\mathfrak a\subseteq R$ be an ideal, and let $s$ and $t$ be positive real numbers with $s + t \in\bZ$. Then we have
  \[
    \mathfrak a^{s + t} \subseteq \tau( \mathfrak a^{s - \epsilon} ) \tau( \mathfrak a^{t- \epsilon})
  \]
  for all $\epsilon \in (0, \min\{s, t\} ]$. 
\end{theoremA*}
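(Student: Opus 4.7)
The strategy is to work in the enveloping algebra $S = R\otimes_k R$, with multiplication $\Delta\colon S\twoheadrightarrow R$ and ideals $\mathfrak a_1 = \mathfrak a\otimes_k R$, $\mathfrak a_2 = R\otimes_k\mathfrak a$, so that $\Delta(\mathfrak a_1^i\mathfrak a_2^j) = \mathfrak a^{i+j}$ for integers $i,j\ge 0$. Note at the outset that no naive integer factorization $\mathfrak a^{s+t} = \mathfrak a^i\mathfrak a^{s+t-i}$ places both factors into $\tau(\mathfrak a^{s-\epsilon})$ and $\tau(\mathfrak a^{t-\epsilon})$ simultaneously when $s,t$ are non-integer, so the two tensor slots of $S$ really are needed. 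Given $f\in\mathfrak a^{s+t}$, lift it to $\Phi = \sum_\ell g_\ell\otimes h_\ell\in S$ with $\Delta(\Phi) = f$. After reducing to the case $s,t,\epsilon \in \tfrac{1}{p^M}\mathbb Z$ by a standard approximation, the diagonal $F$-splitting hypothesis supplies an integer $e_0\ge 1$ and an $S$-linear map $\phi\colon F^{e_0}_*S\to S$ splitting $F^{e_0}$ and satisfying $\phi(F^{e_0}_*\ker\Delta)\subseteq\ker\Delta$. Iterating gives compatible splittings $\phi_N\colon F^N_*S\to S$ for every positive multiple $N$ of $e_0$, and the $F$-finiteness of $k$ lets one decompose each $\phi_N$ as $\sum_m c_m(\psi_m'\otimes\psi_m'')$ with $c_m\in S$ and $\psi_m',\psi_m''\in\mathrm{Hom}_R(F^N_*R,R)$.

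The key computation is to apply $\phi_N$ to $F^N_*(\Phi^{p^N}\eta)$ for $\eta\in\mathfrak a_1^A\mathfrak a_2^B$, where $A=\lceil(s-\epsilon)(p^N-1)\rceil$ and $B=\lceil(t-\epsilon)(p^N-1)\rceil$. The $S$-linearity of $\phi_N$ together with the splitting property gives $\phi_N(F^N_*(\Phi^{p^N}\eta)) = \Phi\cdot\phi_N(F^N_*\eta)$; applying $\Delta$ yields $f\cdot z_\eta$, where
\[
z_\eta \;=\; \sum_m \Delta(c_m)\,\psi_m'(F^N_*\eta_1)\,\psi_m''(F^N_*\eta_2)
\]
for a simple tensor $\eta = \eta_1\otimes\eta_2$ (extended linearly). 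By the very definition of test ideals as sums of Cartier-algebra images over ideals of the shape $\mathfrak a^{\lceil\,\cdot\,(p^N-1)\rceil}$, each factor $\psi_m'(F^N_*\eta_1)$ lies in $\tau(\mathfrak a^{s-\epsilon})$ and each $\psi_m''(F^N_*\eta_2)$ in $\tau(\mathfrak a^{t-\epsilon})$, so $z_\eta\in\tau(\mathfrak a^{s-\epsilon})\tau(\mathfrak a^{t-\epsilon})$, and therefore $f\cdot z_\eta\in\tau(\mathfrak a^{s-\epsilon})\tau(\mathfrak a^{t-\epsilon})$ for every such $\eta$ and every splitting $\phi_N$ arising from iteration of a diagonal $F$-splitting.

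The remaining step — and the main obstacle — is to show that as $\eta$, $\phi_N$, and $N$ vary, the $z_\eta$'s generate the unit ideal of $R$, from which $f\in\tau(\mathfrak a^{s-\epsilon})\tau(\mathfrak a^{t-\epsilon})$ follows. For $A=B=0$ the splitting property gives $\Delta\phi_N(F^N_*(1\otimes 1)) = 1$ for free; the real task is to transfer this to positive $A,B$, and it is here that the $-\epsilon$ loss enters. The discrepancy $sp^N - (s-\epsilon)(p^N-1) = s+\epsilon(p^N-1)$ grows without bound in $N$, furnishing arbitrarily large powers of $\mathfrak a_1$ and $\mathfrak a_2$ that can be absorbed into the test-ideal exponents via the Skoda-type inclusions $\mathfrak a\,\tau(\mathfrak a^r)\subseteq\tau(\mathfrak a^{r+1})$, together with strong $F$-regularity of $R$ to ensure that the iterated diagonal Cartier operators act with enough surjectivity to produce the required unit. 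Making this Skoda-based bookkeeping in the diagonal setting precise — matching the slack $\epsilon(p^N-1)$ exactly to what is needed to promote the trivial $A=B=0$ splitting to the positive-exponent regime — is the technical heart of the argument.
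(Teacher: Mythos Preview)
Your proposal has a genuine gap: the ``remaining step'' --- that the elements $z_\eta$ generate the unit ideal as $\eta$, $\phi_N$, $N$ vary --- is not proved, only gestured at. The Skoda-type bookkeeping you invoke is left entirely vague, and it is not clear how absorbing slack powers of $\mathfrak a$ into the test-ideal exponents would ever force the $z_\eta$ to generate $R$; the inclusions $\mathfrak a\,\tau(\mathfrak a^r)\subseteq\tau(\mathfrak a^{r+1})$ increase exponents rather than produce units. As written, the argument establishes only that $f\cdot z_\eta$ lies in the target, not that $f$ does.

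More importantly, the whole detour through auxiliary multipliers $\eta\in\mathfrak a_1^A\mathfrak a_2^B$ is unnecessary, and the paper's proof avoids it entirely with a one-line observation you missed. The hypothesis $s+t\in\mathbb Z$ is the key: once you push $\mathfrak a^{s+t}$ through an $F$-splitting $\vp\in\sD 2(R/\kay)_e$ via $\mathfrak a^{s+t}=\mathfrak a^{s+t}\vp(F^e_*1)\subseteq\vp(F^e_*\mathfrak a^{(s+t)p^e})$, the exponent $(s+t)p^e$ is an \emph{integer}, and since $\lfloor sp^e\rfloor+\lfloor tp^e\rfloor\le (s+t)p^e$ you have $\mathfrak a^{(s+t)p^e}\subseteq\mathfrak a^{\lfloor sp^e\rfloor}\mathfrak a^{\lfloor tp^e\rfloor}=\mu_\kay(\mathfrak a^{\lfloor sp^e\rfloor}\otimes_\kay\mathfrak a^{\lfloor tp^e\rfloor})$. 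Compatibility of $\vp$ with $\ker\mu_\kay$ lets you swap $\vp\circ F^e_*\mu_\kay$ for $\mu_\kay\circ\hat\vp$, decompose $\hat\vp=\sum_i\vp_{1,i}\otimes\vp_{2,i}$, and conclude immediately that $\mathfrak a^{s+t}\subseteq\tau(\mathfrak a^{s-1/p^e})\tau(\mathfrak a^{t-1/p^e})$ by strong $F$-regularity (since $1$ is a test element). Choosing $p^e>1/\epsilon$ finishes. There is no need to lift individual $f$'s, no need for the elements $z_\eta$, and no unresolved obstacle. Your opening remark that ``no naive integer factorization'' works is precisely what fails to hold after one application of Frobenius.
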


From this, we follow a line of argument similar to that used in \cite{USTPDiagFReg} to get various bounds on symbolic powers; see~\Cref{thm:strongerSymbolicPowers} for the strongest statement. In particular, we have the following. 

\begin{theoremB*}
  In the same setting as Theorem A, let $\mathfrak p \subseteq R$ be a prime ideal and $h = \height \mathfrak p $. Then $\mathfrak p^{(2hn)} \subseteq \mathfrak p^n$ for all $n > 0$. 
  \label{thm:symbolicPowers}
\end{theoremB*}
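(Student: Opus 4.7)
The plan is to adapt the strategy of \cite{USTPDiagFReg} to the weaker subadditivity we have here. In that paper the argument combined three ingredients: (i)~a Hochster--Huneke-type inclusion $\fp^{(hm)} \subseteq \tau(\fp^m)$, valid in any $F$-finite strongly $F$-regular ring; (ii)~the full subadditivity $\tau(\fp^m) \subseteq \tau(\fp)^m$, which came from diagonal $F$-regularity; and (iii)~the containment $\tau(\fp) \subseteq \fp$ for primes $\fp$, a consequence of strong $F$-regularity. Together (i)--(iii) gave $\fp^{(hn)} \subseteq \fp^n$. In our setting, (i) and (iii) remain available, but Theorem~A supplies only a much weaker replacement for (ii): instead of subadditivity on test ideals, we have $\fp^{s+t} \subseteq \tau(\fp^{s-\epsilon}) \tau(\fp^{t-\epsilon})$, which controls only ordinary powers of $\fp$ and costs an $\epsilon$ in each factor. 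The factor of $2$ in the exponent $2hn$ is the price paid for that loss.

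Concretely, I would first recall the Hochster--Huneke-type inclusion $\fp^{(hm)} \subseteq \tau(\fp^m)$, which I expect to appear earlier in the paper as part of the development leading to \Cref{thm:strongerSymbolicPowers}; the standard proof localizes at $\fp$, uses $\fp^{(hm)} R_\fp = \fp^{hm} R_\fp$, and pushes a Frobenius-power containment $\fp^{hq} \subseteq \fp^{[q]}$ through the splitting description of $\tau$. Next, I would apply Theorem~A with $\mathfrak a = \fp$, $s = t = n$, and $\epsilon$ small, to get $\fp^{2n} \subseteq \tau(\fp^{n-\epsilon})^2$. The endgame is to interweave these two containments: the HH inclusion with $m = 2n$ yields $\fp^{(2hn)} \subseteq \tau(\fp^{2n})$, and then one wants to show $\tau(\fp^{2n}) \subseteq \fp^n$ by folding Theorem~A into the Frobenius argument underlying the HH inclusion, so that each ``packet'' of symbolic elements is realized as a Frobenius image landing in $\tau(\fp^{n-\epsilon}) \cdot \tau(\fp^{n-\epsilon}) \subseteq \fp \cdot \fp^{n-1} = \fp^n$, where the last step uses ingredient~(iii) applied to each factor.

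The main obstacle is that Theorem~A points the ``wrong'' direction for a purely sequential application: it bounds the ordinary power $\fp^{s+t}$ from above by a product of test ideals, whereas the HH step outputs a containment into a single $\tau(\fp^{2n})$. One therefore cannot just substitute Theorem~A into the output of HH. The resolution I envision is to interleave the two at the Frobenius level rather than apply them in series, and to arrange the $\epsilon$-loss of Theorem~A so that it is absorbed by the slack in the HH localization step. Getting the exponents to balance to exactly $2h$ (and not some larger multiple) will be the most delicate accounting in the proof.
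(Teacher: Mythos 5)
The gap in your plan is that you apply Theorem~A to the wrong ideal. You take $\mathfrak a = \fp$ with $s = t = n$, which lands you at $\fp^{2n} \subseteq \tau(\fp^{n-\epsilon})^2$ and then leaves you stuck, because, as you yourself observe, Theorem~A ``points the wrong direction'' for plugging into an HH-type inclusion. The paper's trick is to apply Theorem~A with $\mathfrak a = \fp^{(N)}$ \emph{itself} and with $s + t = 1$. Then the left-hand side of Theorem~A is literally $\left(\fp^{(N)}\right)^1 = \fp^{(N)}$, so there is no mismatch of directions and nothing to interleave:
\[
  \fp^{(N)} \subseteq \tau\!\left( \left(\fp^{(N)}\right)^{s-\epsilon} \right) \tau\!\left( \left(\fp^{(N)}\right)^{1-s-\epsilon} \right).
\]
The companion lemma needed is not your ingredient (i), $\fp^{(hm)} \subseteq \tau(\fp^m)$ (which the paper never uses, and whose validity in a general strongly $F$-regular ring is not at all clear), but rather \Cref{lemma:tauSymbolicPower}, which goes the \emph{opposite} way: $\tau\!\left( \left(\fp^{(N)}\right)^t \right) \subseteq \fp^{(\lfloor Nt \rfloor - h + 1)}$. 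That lemma is proved by localizing at $\fp$, using $\fp^{(N)} R_\fp = \fp^N R_\fp$, and applying the Brian\c{c}on--Skoda-type property of test ideals (\Cref{prop:basicFacts}(\ref{p:bF-c})) once $\fp R_\fp$ has an $h$-generated reduction.

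With those two pieces, the proof is a direct chain, not an interleaving at the Frobenius level. Feeding \Cref{lemma:tauSymbolicPower} into each factor gives $\fp^{(N)} \subseteq \fp^{(\lceil Ns\rceil - h)}\,\fp^{(\lceil N(1-s)\rceil - h)}$ for all $N > 2h$ and $s \in (h/N, 1-h/N)$. Specializing $N = 2hn+1$ and $s = 1/(2n)$ yields $\fp^{(2hn+1)} \subseteq \fp\, \fp^{(2h(n-1)+1)}$, and induction on $n$ finishes the argument. The $\epsilon$-loss in Theorem~A is absorbed by the floor/ceiling slack in \Cref{lemma:tauSymbolicPower} (via \Cref{lemma:epsilonCeil}), which is the delicate exponent accounting you correctly anticipated but located at the wrong step. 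Your high-level intuition that the factor of $2$ in $2hn$ is the cost of the $\epsilon$-loss is sound, but as written the proposal has no viable route from $\fp^{2n} \subseteq \tau(\fp^{n-\epsilon})^2$ back to a statement about $\fp^{(2hn)}$.
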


Observe that Theorem B implies $\fp^{(2dn)} \subseteq \fp^n$ for all $\fp$ and all $n$, where $d$ is the dimension of $R$. 

By work of Ramanathan \cite{RamanathanDiagFSplit} and Lauritzen--Raben-Pedersen--Thomsen \cite{GloballyFRegSchubert}, the hypotheses of these theorems are satisfied by any affine subset of a Schubert subvariety of $G/Q$, where $G$ is a connected and simply connected linear algebraic group over an algebraically closed field of positive characteristic, and $Q$ is a parabolic subgroup of $G$ containing a Borel subgroup $B$.  In particular, these theorems hold in any determinantal ring $\kay[X_{m\times n}]/I_t$, where $X_{m\times n}$ is an $m\times n$ matrix of indeterminates, with $m \leq n$, and $I_t$ is the ideal of size $t$ minors of $X$, where $t \leq m$. (We show that $\kay$ need not be algebraically closed here in~\Cref{cor:diagFSplitBaseChg}.) These theorems also hold in diagonally $F$-split toric rings, which were characterized in \cite{PayneUnimodular}. This includes, in particular, all Hibi rings (\emph{cf.} \cite[Theorem 3.7]{HibiDFR}).

By standard ``reduction mod $p$'' techniques, the symbolic powers containment of Theorem B holds in the corresponding determinantal  and toric rings over fields of characteristic zero; see \cite[Section~2]{HHequalCharZero} (see also \cite[Section~6]{USTPDiagFReg}).

\subsection*{Acknowledgements} I would like to thank Elo\'isa Grifo, Melvin Hochster, Srikanth Iyengar, Karl Schwede, George Seelinger, and Karen Smith for their helpful comments and conversations. Thanks also to Pedro Ram\'irez-Moreno and Francisco Espinoza for pointing out an error in Lemma 2.2, and to the referees for many helpful comments and corrections. 

\section{Background on test ideals}
We begin with some background on positive-characteristic commutative algebra. For a more in-depth treatment on this material, see  the surveys \cite{SmithZhangSurvey} and \cite{SchwedeTuckerSurvey}, or the recent lecture notes \cite{HochsterFundTcl}. 

Let $R$ be a ring of characteristic $p$, where $p$ is prime. Then the Frobenius map, 
\[
  F\colon R \lra R, \quad x \longmapsto x^p,
\]
is a ring homomorphism. So are its iterates $F^e(x) \coloneqq x^{p^e}$ for all $e \in \bZ_{\geq 0}$. We write $F^e_*$ to denote restriction of scalars along $F^e$. More precisely, $F^e_*$ is a functor from $R$-modules to $R$-modules. For any $R$-module $M$, we have $F^e_* M = \{ F^e_* m \mid m \in M\}$, where addition and scalar multiplication are defined by $F^e_* m + F^e_* n = F^e_*(m+n)$ and $rF^e_*m = F^e_* (r^{p^e}m)$ for all $r\in R$ and $m,n \in M$. For any $R$-linear map $\vp \colon M \to N$, the map $F^e_* \vp \colon F^e_* M \to F^e_* N$ is given by $(F^e_* \vp)(F^e_* m) = F^e_*(\vp(m))$. If $R$ happens to be a domain, then we can fix an algebraic closure $K$ of $\operatorname{frac}(R)$, and each element of $R$ will have a (unique!) $\supth{(p^e)}$ root in $K$. In this case, $F^e_* R$ is nothing but $R^{1/p^e}$, the set of these $\supth{(p^e)}$ roots with the usual $R$-module structure. We say that $R$ is \emph{$F$-finite} if $F^e_*R$ is finitely generated over $R$ for some (equivalently, all) $e > 0$. 

An \emph{$F$-splitting} of $R$ is a map of $R$-modules $F^e_* R \to R$ sending $F^e_* 1$ to $1$. Such a map splits the natural $R$-module map $R \to F^e_* R$ given by $r \mapsto r F^e_* 1 = F^e_* r^{p^e}$---in this sense, it splits the Frobenius map. We say $R$ is \emph{$F$-split} if it admits an $F$-splitting $F^e_* R \to R$ for some (equivalently, all) $e > 0$. Note that this is equivalent to the existence of an $R$-linear surjection $F^e_* R \surj R$. $F$-split rings are automatically reduced. 

\subsection*{Global assumptions} For the rest of this section, $R$ denotes an $F$-finite, reduced, and Noetherian ring of characteristic $p > 0$. 

\subsection{Test ideals and strong \texorpdfstring{$\boldsymbol{F}$}{F}-regularity} 
Let $R^\circ$ be the set of elements not in any minimal prime of $R$. Two important notions from Hochster and Huneke's celebrated theory of tight closure are \emph{big test elements} and \emph{big test ideals}. A big test element is defined to be any $c \in R^\circ$ such that, for all $d \in R^\circ$, there exist $e> 0$ and $\vp\in \homgp_R(F^e_*R, R)$ with $\vp(F^e_* d) = c$. It is far from obvious, but big test elements exist in any $F$-finite, reduced, Noetherian ring, and the ideal generated by all big test elements of $R$ is called the \emph{big test ideal} of $R$, denoted by $\tau(R)$. It is given by
\[
  \tau(R) \coloneqq \sum_{e > 0} \sum_{\vp \in \homgp_R(F^e_* R, R)} \vp\left( F^e_* c \right),
\]
where $c \in R^{\circ}$ is any test element. One of the more ``magic,'' and extremely useful, aspects of $F$-singularity theory is that this ideal is independent of the choice of $c$ (though this is a quick consequence of the definitions; perhaps the real magic is that test elements exist at all). We say $R$ is \emph{strongly $F$-regular} if $\tau(R) = R$, or equivalently if $1$ is a big test element of $R$. Strong $F$-regularity is a local condition, and strongly $F$-regular local rings are normal Cohen--Macaulay domains. If $S$ is a faithfully flat $R$-algebra and $S$ is strongly $F$-regular, then so is $R$; see \cite[Lecture 10]{HochsterFundTcl}. 

The reader should be warned that the big test ideal is sometimes denoted by $\tilde \tau(R)$ in the literature, such as in \cite{HaraTakagiGeneralizationOfTestIdeals}, whereas the notation $\tau(R)$ is reserved for the ``ordinary'' test ideal. These two ideals are conjectured to be the same, however. In this paper we will only work with big test ideals and big test elements. Thus, we will follow earlier authors in omitting the qualifier ``big''  throughout.

\subsection{Test ideals of pairs}
Test ideals are, in a precise sense, the positive-characteristic analog of multiplier ideals from birational geometry. However, birational geometers have long worked with multiplier ideals not just of schemes $X$, but of pairs $(X, Z)$, where $Z$ is a formal $\bQ$-linear combination of subschemes of $X$. This inspired Hara and Yoshida to define the test ideals  of pairs: if  $\fa$ is an ideal of $R$ not contained in any minimal prime, and $t \geq 0$ is a real number, we say that $c\in R^\circ$ is a \emph{test element of the pair} $(R, \fa^t)$ if, for all $d\in R^{\circ}$, there exist some $e > 0$ and some $\vp \in \homgp_R(F^e_*R, R)$ with 
\[
  c \in \vp\left( F^e_* d \fa^{\ceil{t(p^e-1)}} \right).
\]
Then we can define the \emph{test ideal of the pair} $(R, \fa^t)$ to be
\[
  \tau(R, \fa^t) = \sum_{e > 0 } \sum_{\vp\in \homgp_R(F^e_* R, R)} \vp\left( F^e_* c \fa^{\ceil{t(p^e-1)}} \right),
\]
where $c$ is any test element of the pair $(R, \fa^t)$. We will denote the test ideals by $\tau(\fa^t)$ when the ambient ring $R$ is clear from context. 

These definitions can certainly feel quite daunting and unmotivated to the uninitiated. Fortunately, all of the facts we will need about test ideals can be summarized as follows.  

\begin{proposition}[Properties of test ideals] 
  \label{prop:basicFacts}
  Let $\fa$ be an ideal not contained in any minimal prime of $R$, and let $t \geq 0$. Then there exists a test element of the pair $(R, \fa^t)$; see \cite[Lemma 2.1]{HaraTakagiGeneralizationOfTestIdeals}, \emph{cf.} \cite[Proposition 3.21]{SchwedeNonQGor}. Further, we have the following: 
 \begin{enumerate}
    \item If\, $W\subseteq R$ is any multiplicative set, then $W\invrs \tau(R, \fa^t) = \tau(W\invrs R, (W\invrs \fa)^t)$; see \cite[Proposition~3.1]{HaraTakagiGeneralizationOfTestIdeals}. 
    \item For all $n\in \bZ_{\geq 0}$, $\tau( (\fa^n)^t) = \tau( \fa^{nt})$; see \cite[Remark 6.2]{HaraYoshidaSubadd}. 
    \item If $\fa$ has a reduction generated by $r$ elements, then $\tau(\fa^{t+n}) = \fa^n \tau(\fa^t)$ for all $n \in \bZ_{\geq 0}$ and all $t \geq r-1$; see \cite[Theorem 4.2]{HaraTakagiGeneralizationOfTestIdeals}.       \label{p:bF-c}
    \item If $c \in R$ is a big test element and $\vp \in \homgp_R(F^e_* R, R)$, then $\vp(F^e_* c \fa^{\ceil{tp^e}}) \subseteq \tau(\fa^t)$; see \cite[Lemma~2.1]{HaraTakagiGeneralizationOfTestIdeals}, cf. \cite[Proposition 3.3(5)]{BSTZ_discreteness_and_rationality}. \label{part:vpOfTestElement}
  \end{enumerate}
\end{proposition}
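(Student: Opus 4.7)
The plan is to verify each part by unpacking the definition
\[
  \tau(\fa^t) = \sum_{e > 0} \sum_{\vp \in \homgp_R(F^e_* R, R)} \vp\bigl(F^e_* c\, \fa^{\lceil t(p^e - 1)\rceil}\bigr)
\]
and invoking the cited references; only part (c) is genuinely nontrivial, and it is the step I expect to be the main obstacle.

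Parts (a), (b), and (d) are essentially formal. For (a), the $F$-finite hypothesis makes $F^e_* R$ finitely presented over $R$, so $\homgp_R(F^e_* R, -)$ commutes with the flat base change $R \to W\invrs R$; after checking that a test element of $(R, \fa^t)$ localizes to one of $(W\invrs R, (W\invrs\fa)^t)$ and that $F^e_*$ commutes with localization, the sum defining $\tau$ descends through $W\invrs(-)$. For (b), the exponents $n\lceil t(p^e-1)\rceil$ and $\lceil nt(p^e-1)\rceil$ differ by at most $n-1$, independent of $e$: one containment is immediate from $n\lceil x\rceil \geq \lceil nx\rceil$, and the other is obtained by absorbing the bounded discrepancy via compositions $\vp \circ F^e_* \psi$ for a large auxiliary Frobenius iterate, exploiting the $F$-stability of the defining sum. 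For (d), the standard fact that any big test element of $R$ is also a test element of the pair $(R, \fa^t)$ (pick $a \in \fa \cap R^\circ$ and apply the defining property of $c$ to elements of the form $d \cdot a^N$) places $c$ among the allowed choices in the defining sum, and then the claimed containment follows from $\fa^{\lceil tp^e\rceil} \subseteq \fa^{\lceil t(p^e-1)\rceil}$.

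Part (c) is a Brian\c{c}on--Skoda-type statement and is the main obstacle. Since $\tau$ depends only on the integral closure of $\fa$, I would replace $\fa$ by a reduction $\mathfrak b = (f_1,\ldots,f_r)$ and reduce by induction to the case $n=1$. The core is a Koszul-type pigeonhole on the generators of $\mathfrak b$: among the $\lceil (t+1)(p^e-1)\rceil$ factors from $\mathfrak b$ in a generator $\vp\bigl(F^e_* c\, \mathfrak b^{\lceil (t+1)(p^e-1)\rceil}\bigr)$ of $\tau(\mathfrak b^{t+1})$, the hypothesis $t \geq r-1$ lets one factor out an element of $\mathfrak b$ while leaving an exponent of at least $\lceil t(p^e-1)\rceil$ in the remaining factor, placing the generator in $\mathfrak b \cdot \tau(\mathfrak b^t)$. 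Iterating then yields the full equality $\tau(\fa^{t+n}) = \fa^n \tau(\fa^t)$.
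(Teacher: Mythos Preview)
The paper only writes out a proof of part~(d); parts (a)--(c) are left to the cited references, and your sketches for those are broadly consistent with the standard arguments found there.

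For part~(d), however, your argument has a genuine gap. You assert that any big test element $c$ of $R$ is automatically a test element of the pair $(R,\fa^t)$, justified by ``applying the defining property of $c$ to elements of the form $d\cdot a^N$'' with $a\in\fa\cap R^\circ$. But the big-test-element property hands you some $e$ and $\vp$ with $\vp(F^e_*\,d a^N)=c$, where $e$ depends on the element $d a^N$ and hence on $N$; in order to land inside $\vp(F^e_*\,d\,\fa^{\lceil t(p^e-1)\rceil})$ you need $N\ge\lceil t(p^e-1)\rceil$, and you have no a~priori control on $e$ that would let you choose such an $N$ in advance. The dependency is circular, and this is not repaired by anything else in the sketch.

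The paper's approach sidesteps this issue entirely: it never claims that $c$ is a test element of the pair. Instead it fixes a test element $d$ of the pair $(R,\fa^t)$ (whose existence is the nontrivial cited input), invokes the big-test-element property of $c$ exactly once to obtain $e_0$ and $\psi$ with $\psi(F^{e_0}_*d)=c$, and then computes
\[
\vp\bigl(F^e_*\,c\,\fa^{\lceil tp^e\rceil}\bigr)=\vp\bigl(F^e_*\,\fa^{\lceil tp^e\rceil}\psi(F^{e_0}_*d)\bigr)\subseteq(\vp\cdot\psi)\bigl(F^{e+e_0}_*\,d\,\fa^{p^{e_0}\lceil tp^e\rceil}\bigr),
\]
together with the arithmetic $p^{e_0}\lceil tp^e\rceil\ge tp^{e+e_0}>t(p^{e+e_0}-1)$, hence $p^{e_0}\lceil tp^e\rceil\ge\lceil t(p^{e+e_0}-1)\rceil$. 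This places the image directly inside the defining sum for $\tau(\fa^t)$ computed using $d$. Note that this inequality is precisely where the exponent $\lceil tp^e\rceil$ (rather than $\lceil t(p^e-1)\rceil$) in the statement of~(d) is essential; with the smaller exponent the composition argument would not close.
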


\begin{remark*}
  Recall that an ideal $J\subseteq I$ is called a \emph{reduction} of $I$ if $J I^n = I^{n+1}$ for all $n$ sufficiently large. In Noetherian rings, this is equivalent to saying $\overline J = \overline I$, where $\overline I$ denotes the integral closure of $I$. See \cite[Chapter 8]{HunekeSwansonIntegral}
  \end{remark*}

\begin{proof}
  We prove part \eqref{part:vpOfTestElement}, for the reader's convenience. Let $d$ be a test element of the pair $(R, \fa^t)$. Then, since $c$ is a big test element of $R$, there exists a $\psi\colon F^{e_0}_* R \to R$ with $\psi(F^{e_0}_* d) = c$. It follows that
  \[
    \vp(F^e_* c \fa^{\ceil{tp^e}}) = \vp\left( F^e_* \fa^{\ceil{tp^e}} \psi(F^{e_0}_* d) \right) \subseteq \vp\left( F^e_*  \psi(F^{e_0}_* \fa^{\ceil{tp^e}p^{e_0}}d) \right).
  \]
  Note that $\ceil{tp^e}p^{e_0}$ is an integer and that
  \[
    \ceil{tp^e}p^{e_0} \geq t p^e p^{e_0} > t\left( p^{e+e_0}-1 \right).
  \]
  It follows that $\ceil{tp^e}p^{e_0} \geq \ceil{t( p^{e+e_0}-1 )}$. Finally, since the map
  \[
    F^{e+e_0}_* R \lra R, \quad F^{e+e_0}_* x\longmapsto \vp\left( F^e_* \psi(F^{e_0}_*x) \right)
  \]
  is $R$-linear, we have the desired containment.
\end{proof}

Using  Proposition~\ref{prop:basicFacts}, we can prove the following containment, which is key in establishing the connection between test ideals and symbolic powers. 

\begin{lemma}
  Let $\fp \in \Spec R$ be an ideal of height $h$ with infinite residue field $R_\fp/\fp R_\fp$ \textup{(}\,for instance, $\fp$ can be any nonmaximal prime ideal of $R$\textup{)}. Then
  \[
    \tau( (\fp^{(N)})^t ) \subseteq \fp^{(\floor{Nt} - h+1)}
  \]
  for all integers $N > 0$ and all real numbers $t > 0$ such that $Nt \geq h-1$. 
  \label{lemma:tauSymbolicPower}
\end{lemma}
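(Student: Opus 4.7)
The plan is to localize at $\fp$, apply Property~(c) of \Cref{prop:basicFacts} after producing a reduction of $\fp R_\fp$ generated by $h$ elements, and then contract back along $R \hookrightarrow R_\fp$ via the definition of the symbolic power.

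First I would compute the image of $\tau((\fp^{(N)})^t)$ in $R_\fp$. By Property~(a) this image equals $\tau(R_\fp, (\fp^{(N)} R_\fp)^t)$, and since $\fp^{(N)} R_\fp = \fp^N R_\fp$ straight from the definition of the symbolic power, Property~(b) rewrites it as $\tau(R_\fp, (\fp R_\fp)^{Nt})$.

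Next, because $R_\fp$ is a Noetherian local ring of dimension $h$ with infinite residue field, its maximal ideal $\fp R_\fp$ has analytic spread at most $h$ and hence admits a reduction generated by $h$ elements (by the classical Northcott--Rees theorem), so Property~(c) applies with $r = h$. The right choice of integer to peel off is $n := \floor{Nt} - h + 1$: the hypothesis $Nt \geq h - 1$ gives $n \geq 0$, and the inequality $n \leq Nt - h + 1$ rearranges to $Nt - n \geq h - 1 = r - 1$, which is exactly what Property~(c) requires. It then yields
\[
\tau\bigl(R_\fp, (\fp R_\fp)^{Nt}\bigr) \;=\; (\fp R_\fp)^n \, \tau\bigl(R_\fp, (\fp R_\fp)^{Nt - n}\bigr) \;\subseteq\; (\fp R_\fp)^{\floor{Nt} - h + 1}.
\]

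Finally, since $\tau((\fp^{(N)})^t) \subseteq R$ and its image in $R_\fp$ lies in $(\fp R_\fp)^{\floor{Nt} - h + 1}$, the definition $\fp^{(m)} = \fp^m R_\fp \cap R$ delivers the desired containment. I do not expect a serious obstacle; the only real subtlety is the off-by-one bookkeeping needed to see that $n = \floor{Nt} - h + 1$ (which, since $h$ is an integer, also equals $\floor{Nt - h + 1}$) is simultaneously a non-negative integer and small enough that $Nt - n \geq h - 1$, and that the hypothesis $Nt \geq h - 1$ is precisely what guarantees both conditions.
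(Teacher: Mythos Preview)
Your proof is correct and follows essentially the same route as the paper: localize at $\fp$, rewrite using the identification $\fp^{(N)}R_\fp = \fp^N R_\fp$ and properties (a)--(b), invoke the existence of an $h$-generated reduction of $\fp R_\fp$ from the infinite residue field hypothesis, apply property~(c), and contract back. The only cosmetic difference is that the paper first passes from exponent $Nt$ to $\floor{Nt}$ before applying property~(c) with residual exponent exactly $h-1$, whereas you apply property~(c) directly with residual exponent $Nt - n \geq h-1$; both are valid.
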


\begin{proof}
 We can check this containment locally at $\fp$. Localizing, we get 
  \[
    \tau(R,  (\fp^{(N)})^t )R_\fp  = \tau(R_\fp, (\fp^{(N)} R_\fp)^t) =\tau(R_\fp,  (\fp^N R_\fp)^t) =  \tau( R_\fp, (\fp R_{\fp})^{Nt} ).
  \]
  As the residue field at $\fp$ is infinite, we know that $\fp R_\fp$ has a reduction generated by $h$ elements (see {\cite[Theorem 8.3.7, Corollary 8.3.9]{HunekeSwansonIntegral}}).  Since test ideals localize, it follows from~\Cref{prop:basicFacts}, part~\eqref{p:bF-c}, that
\[
  \tau( R_\fp, (\fp R_\fp)^{Nt})  \subseteq  \tau( R_\fp, (\fp R_\fp)^{\floor{Nt}})  =  \fp^{\floor{Nt} - h + 1} \tau(R_\fp,  (\fp R_\fp)^{h-1})  \subseteq \fp^{\floor{Nt}-h+1}R_\fp,
\]
as desired. 
\end{proof}

\subsection{Tensor products}
Let $A$ be a ring of characteristic $p$ and $R$ an $A$-algebra. 
For all $R$-modules $M$ and $N$, we have a natural surjection of $R\otimes_A R$-modules $\Theta\colon F^e_*M \otimes_A F^e_* N \surj F^e_*(M \otimes_A N)$, just given by $F^e_* m \otimes_A F^e_*n \mapsto F^e_* (m\otimes_A n)$. The kernel of this map is the submodule
  \[
    \left \langle F^e_* am \otimes_A F^e_* n - F^e_* m \otimes_A F^e_* an \mid a \in A, m\in M, n \in N \right \rangle \subseteq F^e_* M \otimes_A F^e_* N.
  \]
  For instance, if  $A$ and $R$ are domains, then $F^e_* (R \otimes_A R) = R^{1/p^e}\otimes_{A^{1/p^e}} R^{1/p^e}$, and this is a homomorphic image of $F^e_* R\otimes_A F^e_*R  = R^{1/p^e} \otimes_A R^{1/p^e}$. If $A = A^p$ (\textit{i.e.}, the Frobenius map $A \to A$ is surjective), then $\Theta$ is an isomorphism. If $A$ is an $F$-finite field and $R$ is finitely generated over $A$, then maps $F^e_* (R \otimes_A R) \to R\otimes_A R$ can be expressed in terms of maps $F^e_* R \to R$, as follows. 

  \begin{lemma}
  Let $\kay$ be an $F$-finite field and $R$ a $\kay$-algebra essentially of finite type. Then we have a natural inclusion 
  $
    \homgp_{R\otimes_\kay R}(F^e_* (R\otimes_\kay R), R\otimes_\kay R) \subseteq \homgp_R(F^e_* R, R)\otimes_\kay \homgp_R(F^e_* R, R).
  $
  \label{lemma:homOfTensor}
\end{lemma}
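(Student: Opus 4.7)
The plan is to factor through the canonical surjection $\Theta\colon F^e_* R \otimes_\kay F^e_* R \twoheadrightarrow F^e_*(R \otimes_\kay R)$ constructed just before the lemma. Precomposition with $\Theta$ gives an injection
\[
\Theta^*\colon \homgp_{R\otimes_\kay R}\bigl(F^e_*(R\otimes_\kay R),\, R\otimes_\kay R\bigr) \hookrightarrow \homgp_{R\otimes_\kay R}\bigl(F^e_* R \otimes_\kay F^e_* R,\, R\otimes_\kay R\bigr),
\]
and on the target there is a natural ``pure tensor'' map
\[
\alpha\colon \homgp_R(F^e_* R, R) \otimes_\kay \homgp_R(F^e_* R, R) \to \homgp_{R\otimes_\kay R}\bigl(F^e_* R \otimes_\kay F^e_* R,\, R\otimes_\kay R\bigr), \qquad \varphi \otimes \psi \longmapsto \bigl(F^e_* r \otimes F^e_* s \mapsto \varphi(F^e_* r) \otimes \psi(F^e_* s)\bigr).
\]
It will suffice to prove $\alpha$ is an isomorphism; then $\alpha^{-1} \circ \Theta^*$ is the desired natural inclusion.

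The essential input is that $M := F^e_* R$ is a finitely presented $R$-module: since $\kay$ is $F$-finite and $R$ is essentially of finite type over $\kay$, $R$ is Noetherian and $F$-finite, so $F^e_* R$ is finitely generated and hence finitely presented. Granting this, I would prove the more general statement that
\[
\alpha_{M_1,M_2}\colon \homgp_R(M_1, R)\otimes_\kay \homgp_R(M_2, R) \to \homgp_{R\otimes_\kay R}(M_1\otimes_\kay M_2,\, R\otimes_\kay R)
\]
is an isomorphism for all finitely presented $R$-modules $M_1, M_2$, by a standard d\'evissage: check $M_1 = M_2 = R$ by direct computation, extend by additivity to finite free modules, and then pick presentations of $M_1$ and $M_2$ and invoke the five-lemma, using that $\otimes_\kay$ is exact since $\kay$ is a field.

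The main obstacle is the free case itself, which rests on the flat base-change comparison $\homgp_R(M, R \otimes_\kay R) \cong \homgp_R(M, R) \otimes_\kay R$ for $M$ finitely presented. This is the standard base-change isomorphism for Hom out of a finitely presented module along a flat ring map, and in our setting $R \otimes_\kay R$ is $R$-flat precisely because $\kay$ is a field. Once this comparison is in hand, the remainder of the d\'evissage is a routine diagram chase.
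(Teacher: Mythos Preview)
Your argument is correct and follows the same two-step strategy as the paper: precompose with the surjection $\Theta$ to get the injection, then identify $\homgp_{R\otimes_\kay R}(F^e_* R \otimes_\kay F^e_* R,\, R\otimes_\kay R)$ with $\homgp_R(F^e_* R, R)\otimes_\kay \homgp_R(F^e_* R, R)$. The only difference is that the paper outsources the second step to \cite[Lemma~3.9]{SmolkinSubaddPublished}, whereas you supply the d\'evissage argument for that isomorphism directly; your sketch (reduce to finite free modules via finite presentation and exactness of $-\otimes_\kay -$, then use flat base change for $\homgp$ along $R \to R\otimes_\kay R$) is exactly the standard proof of that cited lemma.
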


  \begin{proof}
  We have a natural surjection $F^e_* R\otimes_\kay F^e_* R \surj F^e_*(R\otimes_\kay R)$, which induces an inclusion
  \[
    \homgp_{R\otimes_\kay R}( F^e_* (R \otimes_\kay R), R\otimes_\kay R) \subseteq \homgp_{R\otimes_\kay R}(F^e_* R \otimes_\kay F^e_*R, R \otimes_\kay R).
  \]
  By \cite[Lemma 3.9]{SmolkinSubaddPublished}, we have a natural isomorphism 
  \[
    \homgp_{R\otimes_\kay R}(F^e_* R \otimes_\kay F^e_*R, R \otimes_\kay R) \cong \homgp_R(F^e_* R, R) \otimes_\kay \homgp_R(F^e_* R, R). \qedhere
  \]
\end{proof}

\section{Cartier algebras, compatible splittings, and diagonal \texorpdfstring{$\boldsymbol{F}$}{F}-splitting}
In this section we define diagonal $F$-splitting and the closely related diagonal Cartier algebra. Using the language of diagonal Cartier algebras allows us to state a base-change lemma for diagonal $F$-splitting (see \Cref{cor:diagFSplitBaseChg}). Without this base-change lemma, we would need to assume that $\kay$ is algebraically closed in~\Cref{thm:detlDiagFSplit}.

\subsection{Cartier algebras}
Cartier algebras give us a unified framework for discussing test ideals in a much more general setting than just pairs $(R, \fa^t)$; see \cite{SchwedeNonQGor}. We review some of the basics of this theory. 

Let $R$ be a ring of characteristic $p$. For each $e$, define $\sC_e(R) = \homgp_R(F^e_* R, R)$. 
Then $\sC(R)= \bigoplus_e \sC_e(R)$ is a graded, noncommutative ring: given $\vp \in \sC_e(R)$ and $\psi \in \sC_d(R)$, we define the multiplication in $\sC(R)$ by
\[
  (\vp \cdot \psi)(F^{e+d}_* x) = \vp(F^e_* \psi(F^d_* x)).
\]
We define a \emph{Cartier algebra} on $R$ to be a graded subring $\mathscr D \subseteq \sC(R)$  with degree zero piece $\mathscr D_0 = \homgp_R(R,R)$. We say that a Cartier algebra $\mathscr D$ is $F$-split if $\mathscr D_e$ contains an $F$-splitting for some $e > 0$. Note that this is equivalent to the existence of a surjective map $\vp \in \mathscr D_e$ for some $e$: if $\vp$ is surjective, then $\vp(F^e_*x) = 1$ for some $x \in R$. Letting $m_x\in \homgp_R(R, R)$ be ``multiplication by $x$,'' we see that  $\vp\cdot m_x \in \mathscr D_e$ and $(\vp \cdot m_x) (F^e_* 1) = 1$. Further, if $\vp \in \mathscr D_e$ is an $F$-splitting, then $\vp^n \in \mathscr D_{ne}$ is an $F$-splitting for all $n > 0$. 

\subsection{Splittings compatible with an ideal}
One sort of Cartier algebra, which is particularly relevant to us, is the Cartier algebra of maps compatible with a given ideal. Given a map $\vp \colon F^e_* R \to R$ and an ideal $I \subseteq R$, we say $\vp$ is \emph{compatible with $I$} if $\vp(F^e_* I) \subseteq I$. If we set
\[
  \algCompWithI{e} = \left\{ \vp \in \sC_e(R) \mid \vp(F^e_* I) \subseteq I \right\},
\]
then one quickly checks that $\algCompWithIFull \coloneqq \bigoplus_e \algCompWithI{e}$ is a Cartier algebra on $R$; see  \cite[Proposition~3.4]{SmolkinSubaddPublished}. We call this the \emph{Cartier algebra on $R$ of maps compatible with $I$}.

One special property of this Cartier algebra is that maps in $\algCompWithI{e}$ restrict to maps in $\algCompWithI{d}$ when $d < e$, in the following sense: for any $d < e$, we have a natural map $\iota_d^e\colon F^d_* R \to F^e_*R$ given by $\iota_d^e(F^d_* x) = F^e_* x^{p^{e-d}}$. Observe that $\iota_d^e(F^d_* I) \subseteq F^e_* I$. Thus, for any $\psi \in \algCompWithI{e}$, we have $\psi \circ \iota_d^e \in \algCompWithI{d}$. Note that if $R$ is a domain, then $\psi \circ\iota_d^e$ is literally the restriction of $\psi\colon R^{1/p^e}\to R$ to $R^{1/p^d}$. 

We say that $R$ is \emph{$F$-split compatibly with $I$} if the Cartier algebra $\algCompWithIFull$ is $F$-split. If this is the case, then $\algCompWithI{d}$ contains an $F$-splitting for all $d > 0$. Indeed, for any $d>0$, we can find some $e>d$ and an $F$-splitting $\psi \in \algCompWithI{e}$. Then $\psi \circ \iota_d^{e}$ is an $F$-splitting in $\algCompWithI{d}$. 

The Cartier algebra $\algCompWithIFull$ induces a Cartier algebra on $R/I$: each map $\vp \in \algCompWithI{e}$ induces a map $\overline \vp\colon F^e_*(R/I) = F^e_* R/F^e_* I \to R/I$ by  $\overline \vp(F^e_* (x +I)) = \vp(x) + I$. Following \cite[Definition 3.1]{SmolkinSubaddPublished} (but slightly altering the notation), we define
\[
  \restrictedCartAlg{e} = \{ \overline \vp \mid \vp \in \algCompWithI{e}\}.
\]
Then $\restrictedCartAlgFull \coloneqq \bigoplus_e \restrictedCartAlg{e}$ is a Cartier algebra on $R/I$; see \cite[Proposition 3.2]{SmolkinSubaddPublished}. We call this the restriction\footnote{This terminology comes from the geometric picture, where $\overline \vp$ can be seen as the restriction of $\vp$ to the subscheme $V(I)$.} of $\algCompWithIFull$ to $R/I$. Put another way, given a map $\psi\colon F^e_* (R/I) \to R/I$, we say that a map $\widehat \psi\colon F^e_* R \to R$ is a \emph{lift of\, $\psi$ to $R$} if these maps fit into a commutative diagram
\[
  \xymatrix{
    F^e_* R \ar[d] \ar[r]^{\widehat \psi} & R \ar[d] \\
    F^e_* (R/I) \ar[r]^{\psi} & R/I\rlap{,}
  }
\]
where the downward arrows are the canonical surjections. Then we get
\[
  \restrictedCartAlg{e} = \{ \psi \in \sC_e(R/I) \mid \psi \textrm{ has a lift to }R\}.
\]
The map $\iota_d^e$ is a lift of the natural map $\overline \iota_d^e\colon F^d_*(R/I) \to F^e_*(R/I)$, so $\psi \circ \overline \iota_d^e$ is an element of $\restrictedCartAlg{d}$ whenever $\psi$ is an element of $\restrictedCartAlg{e}$. By the same argument as before, it follows that $\restrictedCartAlg{e}$ contains an $F$-splitting for each $e > 0$ whenever $\restrictedCartAlgFull$ is $F$-split. We further have the following.

\begin{lemma}
  \label{lemma:restrictSplitting}
  Let $R$ be a ring of characteristic $p$ and $I$ an ideal of $R$. Consider the following two statements:
  \begin{enumerate} 
    \item\label{lrS-a} $\algCompWithI{e}$ contains an $F$-splitting of $R$. 
    \item\label{lrS-b} $\restrictedCartAlg{e}$ contains an $F$-splitting of $R/I$.
  \end{enumerate}
  Then \eqref{lrS-a} implies \eqref{lrS-b}. If\, $R$ is $F$-split, then \eqref{lrS-b} implies \eqref{lrS-a}. 
\end{lemma}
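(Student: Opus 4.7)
The plan is to handle the two implications separately. The direction (a) $\Rightarrow$ (b) should be immediate from the definitions, while (b) $\Rightarrow$ (a) will require using the hypothesis that $R$ is $F$-split to promote a lift of an $F$-splitting of $R/I$ into an honest $F$-splitting of $R$ living in $\algCompWithI{e}$.

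For (a) $\Rightarrow$ (b), I would take an $F$-splitting $\vp \in \algCompWithI{e}$ guaranteed by hypothesis and check that its reduction $\overline{\vp}$ is itself an $F$-splitting of $R/I$: the computation $\overline{\vp}(F^e_*(1+I)) = \vp(F^e_* 1) + I = 1 + I$ does the job, and $\overline{\vp}$ lies in $\restrictedCartAlg{e}$ by construction.

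For (b) $\Rightarrow$ (a), I would pick an $F$-splitting $\psi \in \restrictedCartAlg{e}$ and choose any lift $\widehat{\psi} \in \algCompWithI{e}$, which exists by definition of $\restrictedCartAlg{e}$. Because $\widehat{\psi}$ lifts $\psi$, the element $\widehat{\psi}(F^e_* 1)$ reduces to $\psi(F^e_*(1+I)) = 1 + I$ modulo $I$, so I may write $\widehat{\psi}(F^e_* 1) = 1 + a$ for some $a \in I$. The goal is then to subtract off a correction term $\theta \in \algCompWithI{e}$ whose image lies in $I$ and which satisfies $\theta(F^e_* 1) = a$; once found, $\widehat{\psi} - \theta$ will remain a lift of $\psi$ (since $\theta$ reduces to zero mod $I$) and will send $F^e_* 1$ to $1$. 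To build $\theta$, I would invoke the $F$-split hypothesis on $R$ to produce some $\sigma \in \homgp_R(F^e_* R, R)$ with $\sigma(F^e_* 1) = 1$ and then take $\theta = m_a \cdot \sigma$ in $\sC(R)$, where $m_a \in \sC_0(R)$ is multiplication by $a$; explicitly, $\theta(F^e_* x) = a \sigma(F^e_* x)$. The image of $\theta$ is contained in $aR \subseteq I$, so $\theta$ is automatically compatible with $I$, and $\theta(F^e_* 1) = a$ as desired.

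The essentially only obstacle is the gap between a lift of an $F$-splitting and an honest $F$-splitting: a priori, $\widehat{\psi}(F^e_* 1)$ need only agree with $1$ modulo $I$. Without some ambient surjection $F^e_* R \surj R$, there is no mechanism to prescribe the value of a Cartier map at $F^e_* 1$, and hence no way to correct $\widehat{\psi}$. The $F$-split hypothesis on $R$ supplies exactly such a surjection $\sigma$, and the construction $\theta = m_a \cdot \sigma$ is the natural way of using it.
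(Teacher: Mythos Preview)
Your proof is correct and essentially identical to the paper's: both reduce $\vp$ to $\overline{\vp}$ for the forward direction, and for the converse both take a lift $\widehat\psi$ with $\widehat\psi(F^e_*1) = 1 + a$ for some $a\in I$, choose an $F$-splitting $\sigma$ of $R$, and observe that $\widehat\psi - a\sigma$ is an $F$-splitting in $\algCompWithI{e}$. The only difference is notational (the paper writes $i\alpha$ where you write $\theta = m_a\cdot\sigma$).
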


\begin{proof}
  If $\vp \in \algCompWithI{e}$ is an $F$-splitting, then $\overline \vp$ is an $F$-splitting in $\restrictedCartAlg{e}$. For the other direction, suppose that $\psi \in \restrictedCartAlg{e}$ is an $F$-splitting of $R/I$ and suppose that $\alpha$ is an $F$-splitting of $R$. Then $\psi$ has some lift $\widehat \psi \in \algCompWithI{e}$ with $\widehat \psi(F^e_* 1) \in 1 + I$. Let $i = \widehat \psi(F^e_* 1) - 1$. Then $\widehat \psi - i \alpha$ is an $F$-splitting and $\widehat \psi - i \alpha \in \algCompWithI{e}$. 
\end{proof}

\subsection{Diagonal \texorpdfstring{$\boldsymbol{F}$}{F}-splitting}
Let $A$ be an $F$-finite ring, and let $R$ be an $A$-algebra essentially of finite type. Let $\mu_A$ be the multiplication map, 
\[
  \mu_A \colon R \otimes_A R \lra R, \quad \mu_A(x \otimes_A y) = xy.
\]
Then $R$ is defined to be \emph{diagonally $F$-split over $A$} if $R\otimes_A R$ is $F$-split compatibly with $\ker \mu_A$. We define the \emph{second diagonal Cartier algebra on $R$ over $A$} to be the Cartier algebra
\[
  \sD 2(R/A) \coloneqq \restrictedCartAlgSpecifyRFull{R\otimes_A R}{\ker \mu_A};
\]
see \cite[Notation 3.7]{SmolkinSubaddPublished}, \cite[Definition 3.1]{USTPDiagFReg}. The upshot is that if $R$ is diagonally $F$-split over $A$, then the Cartier algebra $\sD 2(R/A)$ is $F$-split, which implies that for all $e > 0$, we can find $F$-splittings $\vp$ and $\overline \vp$ making the following diagram commute:
\begin{equation*}
\xymatrix{
F_*^e (R \otimes_A R) \ar[r]^-{\vp} \ar[d]_-{F^e_* \mu_A} & R\otimes_A R \ar[d]^-{\mu_A}\\ 
 F_*^e R\ar[r]^-{\overline \vp} & R\rlap{.}
} \label{eq:splittingdiagram}
\end{equation*}

\begin{remark}
  If $A = A^p$ and $\vp$ is any $F$-splitting on $R$, then $\vp\otimes_A \vp$ is a well-defined $F$-splitting on $R \otimes_A R$ because $F^e_*(R\otimes_A R) = F^e_* R \otimes_A F^e_* R$. Then, by~\Cref{lemma:restrictSplitting}, $R$ is diagonally $F$-split over $A$ if and only if $\sD 2(R/A)$ is $F$-split. This amends an error in \cite[Remark 3.3]{USTPDiagFReg}, where we overlooked the necessity that Frobenius be surjective on the base ring. 
\end{remark}

\begin{remark}
Diagonal $F$-splitting was first studied by Ramanathan in \cite{RamanathanDiagFSplit} (\emph{cf.} \cite[Section 1.5]{BKFsplitting}), where he defined the notion for projective varieties over an algebraically closed field. Ramanathan was interested in diagonally $F$-split varieties because of the cohomological vanishing results they enjoy. For instance, given any ample line bundles $\mathscr L$ and $\mathscr M$ on a diagonally $F$-split projective variety $X$, the map
\[
  H^0(X, \mathscr L) \otimes H^0(X, \mathscr M) \to H^0(X, \mathscr L \otimes \mathscr M)
\]
is surjective. In particular, the section ring $\bigoplus_{n \geq 0} H^0(X, \mathscr L^n)$ is generated in degree 1. We will come back to the global definition of diagonal $F$-splitting in~\Cref{sec:global}. 
\end{remark}

\section{The main theorems}

\begin{lemma}
  \label{lemma:psiInTau}
  Let $R$ be a Noetherian, $F$-finite, and strongly $F$-regular domain. Then for all $\psi\colon F^e_*R \to R$, all ideals $J \subseteq R$, and all $s > 0$, we have $\psi(F^e_* J^{\lfloor sp^e \rfloor}) \subseteq \tau(J^{s - \frac{1}{p^e}})$.
\end{lemma}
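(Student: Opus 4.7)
The plan is to reduce the statement directly to Proposition~\ref{prop:basicFacts}\eqref{part:vpOfTestElement}. Since $R$ is a strongly $F$-regular domain, $1$ is a big test element, so that proposition (applied with $c = 1$ and the ideal $J$, assumed nonzero as the zero case is trivial) yields
\[
  \vp\bigl(F^{e}_* J^{\lceil t p^e \rceil}\bigr) \subseteq \tau(J^t)
\]
for every $\vp \in \homgp_R(F^e_* R, R)$ and every $t \geq 0$. I would therefore set $t = s - 1/p^{e}$ and try to arrange $\lceil t p^e \rceil \leq \lfloor sp^e \rfloor$, after which the lemma follows by monotonicity of powers.

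The key arithmetic step is the identity $\lceil s p^e - 1 \rceil = \lceil s p^e \rceil - 1$, which is valid for any real number. This gives $\lceil t p^e \rceil = \lceil s p^e \rceil - 1$, and this is at most $\lfloor s p^e \rfloor$ in both cases: equality holds when $s p^e$ is not an integer, and otherwise both sides differ by exactly $1$ in the correct direction. Hence $J^{\lfloor s p^e \rfloor} \subseteq J^{\lceil t p^e \rceil}$, and
\[
  \psi\bigl(F^e_* J^{\lfloor s p^e \rfloor}\bigr) \subseteq \psi\bigl(F^e_* J^{\lceil t p^e \rceil}\bigr) \subseteq \tau(J^{s - 1/p^e}).
\]

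The only issue that requires care is the boundary regime where $s \leq 1/p^e$, so that $t = s - 1/p^e$ is nonpositive. In that range $\lfloor s p^e \rfloor \in \{0, 1\}$, so either $J^{\lfloor s p^e \rfloor} = R$, in which case one has to interpret $\tau(J^{t})$ for $t \leq 0$ as $R$, or $\lfloor sp^e \rfloor = 1$ and a brief separate check is needed. Apart from this bookkeeping, the statement is essentially a one-line corollary of Proposition~\ref{prop:basicFacts}\eqref{part:vpOfTestElement}; no deeper input is required, and I expect no genuine obstacle.
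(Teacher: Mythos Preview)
Your proposal is correct and follows essentially the same approach as the paper: both reduce to Proposition~\ref{prop:basicFacts}\eqref{part:vpOfTestElement} with $c=1$ (available because $R$ is strongly $F$-regular) after verifying the arithmetic inequality $\lfloor sp^e\rfloor \geq \lceil sp^e\rceil - 1 = \lceil (s-1/p^e)p^e\rceil$. The paper's argument is slightly terser and does not comment on the boundary regime $s\leq 1/p^e$, which you handle by the natural convention $\tau(J^t)=R$ for $t\leq 0$.
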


\begin{proof}
    We have an inequality
     \[
    \floor{ sp^e} \geq \ceil{sp^e} - 1 = \ceil{sp^e-1} = \left \lceil \left(s - \frac{1}{p^e}\right)p^e \right \rceil, 
  \] 
  and so $\psi(F^e_* J^{\floor{sp^e}}) \subseteq \psi(F^e_* J^{\ceil{(s - \frac{1}{p^e})p^e}})$. 
  Since $R$ is strongly $F$-regular, we know that $1$ is a test element of $R$. Then the conclusion follows from~\Cref{prop:basicFacts}, part (d).
  \end{proof}

\begin{theorem}
  Let $\kay$ be an $F$-finite field of positive characteristic, and let $R$ be a strongly $F$-regular $\kay$-algebra essentially of finite type with $\sD 2(R/\kay)$ $F$-split. Let $\mathfrak a\subseteq R$ be an ideal, and let $s, t > 0$ with $s + t \in\bZ$. Then we have
  \[
    \mathfrak a^{s + t}  \subseteq \tau( \mathfrak a^{s - \epsilon} ) \tau( \mathfrak a^{t- \epsilon})
  \]
  for all $\epsilon \in (0, \min\{s, t\} ]$. 
  \label{thm:mainthm}
\end{theorem}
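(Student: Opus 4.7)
The plan is to use the diagonal $F$-splitting, for each sufficiently large $e$, to decompose an arbitrary $f \in \fa^{s+t}$ as
\[
  f = \sum_{i,j} \psi_i(F^e_* x_j)\,\chi_i(F^e_* y_j),
\]
where $x_j \in \fa^{\lfloor sp^e\rfloor}$, $y_j \in \fa^{\lfloor tp^e\rfloor}$, and $\psi_i, \chi_i \in \homgp_R(F^e_* R, R)$. Once such a decomposition is in hand, \Cref{lemma:psiInTau} will immediately place each factor on the right into $\tau(\fa^{s - 1/p^e})$ or $\tau(\fa^{t - 1/p^e})$ respectively, and choosing $e$ with $1/p^e < \epsilon$ will deliver the desired containment by monotonicity of test ideals in the exponent.

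For the setup, the hypothesis that $\sD 2(R/\kay)$ is $F$-split, combined with the restriction maps $\iota_d^e$ of Section~3, furnishes for every $e > 0$ an $F$-splitting $\overline{\vp} \in \sD 2_e(R/\kay)$ together with a lift $\vp \in \algCompWithIdeal{R\otimes_\kay R}{\ker\mu}{e}$ making $\mu \circ \vp = \overline{\vp} \circ F^e_* \mu$. By \Cref{lemma:homOfTensor}, after pulling back along the surjection $F^e_* R \otimes_\kay F^e_* R \twoheadrightarrow F^e_*(R\otimes_\kay R)$, we may write $\vp$ as a finite sum $\sum_i \psi_i \otimes_\kay \chi_i$ with $\psi_i, \chi_i \in \homgp_R(F^e_*R,R)$, which evaluates on elementary tensors by $F^e_*(x \otimes_\kay y) \mapsto \sum_i \psi_i(F^e_* x) \otimes_\kay \chi_i(F^e_* y)$.

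The core step is the lifting of $f^{p^e}$ along $\mu$. Setting $n = s+t \in \bZ_{>0}$, $a = \lfloor sp^e \rfloor$, and $b = np^e - a$: since $n$ is an integer, so is $b$, and the inequality $a \leq sp^e$ yields $b \geq tp^e \geq \lfloor tp^e \rfloor$. For $f \in \fa^n$ I would expand $f^{p^e}$ as a sum of length-$np^e$ monomials in generators of $\fa$ and regroup each monomial as (first $a$ factors) $\otimes_\kay$ (remaining $b$ factors); this produces $\eta \in \fa^a \otimes_\kay \fa^b$ with $\mu(\eta) = f^{p^e}$. The identity $\overline{\vp}(F^e_* f^{p^e}) = f$ (since $\overline{\vp}(F^e_* 1) = 1$), combined with commutativity of the splitting square, then gives $f = \mu(\vp(F^e_* \eta))$, which upon expanding $\vp = \sum_i \psi_i \otimes_\kay \chi_i$ is exactly the displayed decomposition.

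I expect the main obstacle to be the \emph{simultaneous} control of both exponents $a$ and $b$, which is precisely where the hypothesis $s+t \in \bZ$ enters: integrality is exactly what lets $a + b = np^e$ with $a \geq \lfloor sp^e \rfloor$ and $b \geq \lfloor tp^e \rfloor$ at the same time, so that \Cref{lemma:psiInTau} can be invoked on both sides. A smaller point to verify is that one may arrange the $F$-splitting $\overline{\vp}$ (and a lift $\vp$) to exist in degree $e$ for \emph{every} $e > 0$, not just some $e$; this follows from composing with $\iota_d^e$ as in the Cartier-algebra discussion of Section~3.
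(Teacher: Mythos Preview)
Your proposal is correct and follows essentially the same route as the paper: choose $e$ with $1/p^e<\epsilon$, take an $F$-splitting $\overline{\vp}\in\sD 2_e(R/\kay)$ with a lift $\widehat\vp$, use \Cref{lemma:homOfTensor} to write $\widehat\vp$ as $\sum_i \psi_i\otimes_\kay\chi_i$, split $\fa^{(s+t)p^e}$ as $\fa^{\lfloor sp^e\rfloor}\cdot\fa^{\lfloor tp^e\rfloor}$ via the integrality of $s+t$, and finish with \Cref{lemma:psiInTau} and monotonicity of $\tau$ in the exponent. The only cosmetic difference is that you phrase the argument element-by-element while the paper works at the level of ideals throughout.
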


\begin{proof}
  Pick $e$ such that $p^e > 1/\epsilon$. Since $\sD 2(R/\kay)$ is $F$-split, there is an $F$-splitting $\vp \colon F^e_* R\to R$ with a lift $\hat \vp: F^e_* (R\otimes_\kay R) \to R\otimes_\kay R$. Then
  \[
    \mathfrak a^{s + t}  = \mathfrak a^{s + t}\vp(F^e_* 1)  \subseteq \vp(F^e_* \mathfrak a^{(s+t)p^e}) \subseteq \vp(F^e_* \mu_\kay (\mathfrak a^{\floor{sp^e}} \otimes_\kay \mathfrak a^{\floor{tp^e}})) = \mu_\kay( \hat \vp(F^e_*( \mathfrak a^{\floor{sp^e}} \otimes_\kay \mathfrak a^{\floor{tp^e}}))).
  \]
  By~\Cref{lemma:homOfTensor}, we can write  $\vp = \sum_i \vp_{1, i} \otimes \vp_{2, i}$ for some $\vp_{1, i}, \vp_{2,i} \in \homgp_R(F^e_*R, R)$. Then, applying~\Cref{lemma:psiInTau} gives us 
  \[
    \hat \vp(F^e_* (\mathfrak a^{\floor{sp^e}} \otimes_\kay \mathfrak a^{\floor{tp^e}})) \subseteq \tau(\mathfrak a^{s - \frac{1}{p^e}}) \otimes_\kay \tau(\mathfrak a^{t - \frac{1}{p^e}}),
  \]
  and so we have
  \[
    \mathfrak a^{s + t} \subseteq  \tau(\mathfrak a^{s - \frac{1}{p^e}}) \cdot \tau(\mathfrak a^{t - \frac{1}{p^e}}) \subseteq \tau(\mathfrak a^{s - \epsilon}) \cdot \tau(\mathfrak a^{t - \epsilon}). 
\qedhere
  \]
\end{proof}

The following bit of arithmetic comes up in the proof of the next theorem. We state it on its own for the sake of clarity. 

\begin{lemma}
  For all $x\in \bR$, there exists some $\epsilon >0$ such that $x - \epsilon > \ceil{x} - 1$. \label{lemma:epsilonCeil}
\end{lemma}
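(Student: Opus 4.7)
The plan is to reduce the claim to the strict inequality $\lceil x \rceil - 1 < x$, which holds for every real $x$. This is essentially the defining property of the ceiling function: $\lceil x \rceil$ is the smallest integer that is $\geq x$, so the next integer below it, namely $\lceil x \rceil - 1$, cannot satisfy $\lceil x \rceil - 1 \geq x$; hence $\lceil x \rceil - 1 < x$ strictly. (One can split into cases $x \in \bZ$ and $x \notin \bZ$, but the inequality is clean and uniform either way.)

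Once that strict inequality is in hand, I would set $\delta := x - (\lceil x \rceil - 1)$, which is a strictly positive real number. Then for any $\epsilon \in (0, \delta)$ — for instance, $\epsilon := \delta/2$ — the inequality $x - \epsilon > x - \delta = \lceil x \rceil - 1$ is immediate. This produces the desired $\epsilon$.

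There is no substantive obstacle here; the entire content is the ceiling inequality $\lceil x \rceil - 1 < x$ together with a choice of $\epsilon$ small enough. The only care required is to note that the inequality is \emph{strict} (even when $x$ is itself an integer, in which case $\lceil x \rceil - 1 = x - 1 < x$), since this strictness is exactly what guarantees $\delta > 0$ and hence the existence of a valid positive $\epsilon$.
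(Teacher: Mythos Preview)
Your proof is correct and essentially identical to the paper's: both establish the strict inequality $\lceil x \rceil - 1 < x$ and conclude. The paper splits into the cases $x \in \bZ$ and $x \notin \bZ$ (noting $\lceil x \rceil - 1 = \lfloor x \rfloor$ in the latter), whereas you give the uniform argument from the definition of $\lceil x \rceil$; the content is the same.
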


\begin{proof}
  If $x$ is an integer, then $\ceil x = x$ and the statement is obvious. Otherwise, $\ceil x -1 = \floor x < x$ and the statement follows.
\end{proof}

\begin{theorem}
  \label{thm:strongerSymbolicPowers}
  Let $\kay$ be an $F$-finite field of positive characteristic, and let $R$ be a strongly $F$-regular $\kay$-algebra essentially of finite type, with $\sD 2(R/\kay)$ $F$-split.  Let $\mathfrak p \subseteq R$ be a prime ideal, and let $h$ be the height of $\fp$. Then we have 
  \begin{equation}
    \mathfrak p^{(N)} \subseteq \fp^{(\ceil{Ns} - h)} \mathfrak p^{(\ceil{N(1-s)} - h)} \label{eq:mainContainment}
  \end{equation}
  for all $N  > 2h$ and all $s \in \left( \frac hN, 1 - \frac hN \right)$. Further, we have
  \begin{equation}
    \mathfrak p^{(2hn)}  \subseteq \fp^{n} \label{eq:USTP}
  \end{equation}
  for all $n \geq 1$. 
\end{theorem}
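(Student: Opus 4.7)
The plan is to first establish containment \eqref{eq:mainContainment} by applying \Cref{thm:mainthm} to the ideal $\fp^{(N)}$, then translating the resulting test-ideal containment into one of symbolic powers via \Cref{lemma:tauSymbolicPower}. The uniform containment \eqref{eq:USTP} will then follow from \eqref{eq:mainContainment} by a judicious choice of $s$ and a short iteration.

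For \eqref{eq:mainContainment}, set $\fa = \fp^{(N)}$ in \Cref{thm:mainthm} and take exponents $s$ and $1-s$; since $s + (1-s) = 1 \in \bZ$, I obtain
\[
  \fp^{(N)} \subseteq \tau\bigl((\fp^{(N)})^{s - \epsilon}\bigr) \cdot \tau\bigl((\fp^{(N)})^{(1-s) - \epsilon}\bigr)
\]
for every $\epsilon \in (0, \min\{s, 1-s\}]$. \Cref{lemma:tauSymbolicPower} then gives $\tau((\fp^{(N)})^{u}) \subseteq \fp^{(\floor{Nu}-h+1)}$ whenever $Nu \geq h-1$. Matching the target exponents in \eqref{eq:mainContainment} amounts to the inequality $\floor{N(s-\epsilon)} \geq \ceil{Ns} - 1$ and its analogue for $1-s$; applying \Cref{lemma:epsilonCeil} to $x = Ns$ and $x = N(1-s)$ yields an $\epsilon$ small enough to achieve both. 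The hypothesis $s \in (h/N,\, 1 - h/N)$ ensures $Ns, N(1-s) > h$, so both the threshold $Nu \geq h-1$ of \Cref{lemma:tauSymbolicPower} and the positivity of the exponents on the right-hand side of \eqref{eq:mainContainment} are automatic for small $\epsilon$.

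For \eqref{eq:USTP}, I apply \eqref{eq:mainContainment} with the distinguished choice $s = (h + \delta)/N$ for $\delta \in (0,1)$ small enough that $s < 1 - h/N$ (which holds once $N > 2h$). This gives $\ceil{Ns} = h+1$ and $\ceil{N(1-s)} = N - h$, so \eqref{eq:mainContainment} collapses to
\[
  \fp^{(N)} \subseteq \fp \cdot \fp^{(N - 2h)}.
\]
Starting from $N = 2hn$ with $n \geq 2$ and iterating this $n-1$ times (each application is legitimate, since the running index $2h(n-k) > 2h$ for $k < n-1$), I arrive at
\[
  \fp^{(2hn)} \subseteq \fp^{n-1} \cdot \fp^{(2h)} \subseteq \fp^{n},
\]
where the last inclusion uses the trivial fact $\fp^{(k)} \subseteq \fp$ for all $k \geq 1$. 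The base case $n = 1$ is this trivial fact itself.

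The main technical obstacle I anticipate is the simultaneous $\epsilon$-bookkeeping in proving \eqref{eq:mainContainment}: one must respect the upper bound $\epsilon \leq \min\{s, 1-s\}$ imposed by \Cref{thm:mainthm}, keep $N(s-\epsilon) \geq h-1$ so that \Cref{lemma:tauSymbolicPower} applies, and arrange the floor-versus-ceiling inequality on both sides via \Cref{lemma:epsilonCeil}. Once \eqref{eq:mainContainment} is in hand, the deduction of \eqref{eq:USTP} is essentially automatic, modulo spotting that the choice $s \approx h/N$ turns it into a clean one-step recursion on symbolic powers.
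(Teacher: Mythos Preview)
Your proposal is correct and follows essentially the same route as the paper: apply \Cref{thm:mainthm} with exponents $s$ and $1-s$, convert to symbolic powers via \Cref{lemma:tauSymbolicPower}, and control the floor/ceiling discrepancy using \Cref{lemma:epsilonCeil}. Two minor remarks: you should note up front that a maximal $\fp$ can be disposed of trivially (since \Cref{lemma:tauSymbolicPower} needs an infinite residue field), and your derivation of \eqref{eq:USTP} via $s=(h+\delta)/N$ yielding $\fp^{(N)}\subseteq \fp\,\fp^{(N-2h)}$ is a slight variant of the paper's choice $N=2hn+1$, $s=1/(2n)$---both give an equivalent one-step recursion.
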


\begin{proof}
Note that we must have $N> 2h$ for the interval $\left( \frac hN, 1 - \frac hN \right)$ to be nonempty, and that the numbers $Ns - h$ and $N(1-s) - h$ are positive. Symbolic powers and ordinary powers of maximal ideals are the same, so we may assume that $\mathfrak p$ is not maximal. Given any such $N$ and $s$, we have
  \[
    \fp^{(N)} \subseteq \tau\left( \left( \fp^{(N)} \right)^{s - \epsilon} \right)\tau\left( \left( \fp^{(N)} \right)^{1-s - \epsilon} \right),
  \]
  by~\Cref{thm:mainthm}. Note that 
  \[
    N(s - \epsilon) -h + 1 > \ceil{Ns}-h
  \]
  for $\epsilon$ small enough, by~\Cref{lemma:epsilonCeil}. As $\ceil{Ns} - h$ is an integer, this means that 
  \[
    \floor{N(s - \epsilon)} -h+1 = \floor{N(s - \epsilon) -h+1} \geq  \ceil{Ns}-h.
  \]
  It follows from ~\Cref{lemma:tauSymbolicPower} that
  \[
    \tau\left( \left( \fp^{(N)} \right)^{s - \epsilon} \right) \subseteq  \fp^{(\floor{N(s - \epsilon)} -h + 1)} \subseteq \fp^{(\ceil{Ns} - h)}.
  \]
  By symmetry, we have
  \[
    \tau\left( \left( \fp^{(N)} \right)^{1-s - \epsilon} \right) \subseteq \fp^{(\ceil{N(1-s)} - h)},
  \]
  as desired. 

  Plugging $N = 2hn + 1$ and $s = \frac{1}{2n}$ into Equation \eqref{eq:mainContainment} gives us 
  \begin{equation*}
    \mathfrak p^{(2hn+1)} \subseteq \fp \fp^{(2h(n-1)+1)},
  \end{equation*}
  so by induction on $n$ we have
  \[
    \forall n \geq 1\colon \quad \fp^{(2hn+1)} \subseteq \fp^{n} \fp^{(1)} = \fp^{n+1}.
  \]
  This shows Equation \eqref{eq:USTP} for $n \geq 2$, as $\fp^{(2h(n+1))} \subseteq \fp^{(2hn + 1)}$. The containment $\fp^{(2h)} \subseteq \fp$ is clear from the definitions.
\end{proof}

\begin{remark}
  If $n \geq 2$, then taking $N = hn + 1$ and $s = \frac{1}{n}$ in Equation \eqref{eq:mainContainment} yields $\fp^{(hn+1)} \subseteq \fp \fp^{(h(n-2)+1)}$. 
\end{remark}

\begin{remark}
  More generally, if $R$ is just strongly $F$-regular, then we have
  \[
    \mathfrak a^{s + t}  \tau( \sD 2(R/\kay)) \subseteq \tau( \mathfrak a^{s} ) \tau( \mathfrak a^{t})
  \]
  for all $s, t \geq 0$ with $s+t\in \bZ$ (\emph{cf.} \cite[Proposition 3.4]{USTPDiagFReg}). Then an argument as in~\Cref{thm:strongerSymbolicPowers} yields
  \[
    \forall n \geq 2\colon\quad \mathfrak p^{(hn)}  \tau(\sD 2(R/\kay)) \subseteq \mathfrak p \mathfrak p^{(h(n-2)+1)}; 
  \]
   the lack of epsilons ``$\epsilon$'' here allows us to remove the ``$+1$'' from the symbolic power on the left. It follows that 
  \[
    \forall n \geq 1\colon\quad \fp^{(2h(n+1))}\tau(\sD 2(R/\kay))^n \subseteq \fp^{n+1}.
  \]
\end{remark}

\section{Applications to determinantal rings} 
Our goal in this section is to explain how~\Cref{thm:mainthm,thm:strongerSymbolicPowers} apply to determinantal rings. In particular, we show the following.  

\begin{theorem}
  Let $\kay$ be an $F$-finite field and $X_{m\times n}$ an $m\times n$ matrix of indeterminates over $\kay$. Choose integers $p \leq m$ and $q \leq n$, as well as integers 
  \begin{align*}
    1 \leq u_1 < \cdots < u_p \leq m, \quad & 1 \leq r_1 < \cdots < r_p \leq m,\\
    1 \leq v_1 < \cdots < v_q \leq n, \quad & 1 \leq s_1 < \cdots < s_q \leq n. 
  \end{align*}
  Let $I$ be the ideal generated by the size $r_i$ minors of the first $u_i$ rows of $X$ as well as the size $s_j$ minors of the first $v_i$ columns of $X$, for all $i = 1, \ldots, p$ and $j = 1, \ldots, q$ \textup{(}with the convention that any size $t$ minor of a size $u \times v$ matrix is 0 if $t > \min\{u, v\}$\textup{)}.  Then  $R = \kay[X_{m\times n}]/I$ is strongly $F$-regular and $\sD 2(R/\kay)$ is $F$-split. In particular, we have $\fp^{(2hn)} \subseteq \fp^n$ for all $\fp \in \Spec R$ of height $h$ and for all $n$. 
  \label{thm:detlDiagFSplit}
\end{theorem}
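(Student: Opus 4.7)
The plan is to realize $R$ as the coordinate ring of an affine open patch (the opposite big cell) of a Schubert variety $X_w$ in a partial flag variety $G/Q$ of type $A$, and then invoke the theorems of Ramanathan~\cite{RamanathanDiagFSplit} and Lauritzen--Raben-Pedersen--Thomsen~\cite{GloballyFRegSchubert} already cited in the introduction.

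First I would identify $\Spec R$ with the opposite big cell of a Schubert variety. The defining ideal $I$ imposes two families of rank conditions on $X$: the size-$r_i$ minors of the first $u_i$ rows vanish precisely when the corresponding $u_i \times n$ submatrix of $X$ has rank less than $r_i$, and similarly for the column conditions. Each such rank condition is the scheme-theoretic condition of membership in a Schubert subvariety of a Grassmannian. Packaging the discrete data $\{(u_i, r_i)\}$ and $\{(v_j, s_j)\}$ into a single Weyl-group element $w$ and parabolic $Q$ inside $G = \GL_{m+n}$, the affine scheme $\Spec R$ becomes the opposite big cell of $X_w \subseteq G/Q$.

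Next, suppose first that $\kay$ is algebraically closed. Ramanathan's theorem then shows that $X_w$ is diagonally $F$-split in the global/projective sense, and Lauritzen--Raben-Pedersen--Thomsen shows that $X_w$ is globally $F$-regular. Restricting to the opposite big cell, global $F$-regularity of $X_w$ yields strong $F$-regularity of $R$, and the global diagonal splitting of $X_w$ restricts to an $F$-splitting of the affine diagonal Cartier algebra $\sD 2(R/\kay)$; the necessary passage between the global and affine definitions of diagonal $F$-splitting is the content of~\Cref{sec:global}.

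To remove the algebraically closed hypothesis, I would apply $\Cref{cor:diagFSplitBaseChg}$ mentioned in the introduction, together with faithfully flat descent of strong $F$-regularity recorded in the background section. Writing $\ell$ for the algebraic closure of $\kay$, the ring $\ell \otimes_\kay R$ satisfies both hypotheses of Theorem~B by the previous paragraph, and both properties descend to $R$. An application of Theorem~B then gives $\fp^{(2hn)} \subseteq \fp^n$. The main obstacle I anticipate is the first step: producing the clean combinatorial identification of this ladder-determinantal ring---which simultaneously imposes both row and column rank conditions---with the opposite big cell of a single Schubert variety $X_w$, and verifying that the listed minors generate the full (not merely the radical) ideal of $X_w$ on that cell.
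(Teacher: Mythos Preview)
Your proposal is correct and follows essentially the same approach as the paper: realize $R$ as an affine open of a Schubert variety (the paper uses the Grassmannian $\Gr_m(\kay^{m+n})$ specifically, citing \cite[Theorems~5.3--5.5]{BrunsVetter} for exactly the combinatorial identification you flag as the main obstacle), apply Ramanathan and Lauritzen--Raben-Pedersen--Thomsen over the algebraic closure, then descend via \Cref{cor:diagFSplitBaseChg} and faithfully flat descent of strong $F$-regularity before invoking Theorem~B. The only cosmetic difference is that you frame the ambient variety as a general partial flag $G/Q$ for $\GL_{m+n}$, whereas the paper works directly in the Grassmannian and its Pl\"ucker embedding.
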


Proving this theorem is mostly a matter of collecting results found in the literature. The argument goes as follows:
Generalized Schubert varieties over algebraically closed fields are known to be globally $F$-regular and diagonally $F$-split, and any open affine subscheme of a globally $F$-regular (respectively, diagonally $F$-split) scheme is strongly $F$-regular (respectively, diagonally $F$-split). Each ring described in~\Cref{thm:detlDiagFSplit} is an open affine subscheme of some Schubert subvariety of a Grassmannian, and thus these rings are strongly $F$-regular and their diagonal Cartier algebras $\sD 2(R/\kay)$ are $F$-split, at least if the base field $\kay$ is algebraically closed. It is well known that strong $F$-regularity can be checked after changing the base field; we show in~\Cref{cor:diagFSplitBaseChg} that the $F$-splitting of $\sD 2(R/\kay)$ can be checked after base changing from $\kay$ to any perfect field. Let us begin by reviewing some of this language. 

\subsection{Global definitions} \label{sec:global}
Global $F$-regularity is, as the name suggests, a global notion of strong $F$-regularity: a projective variety $X$ over an $F$-finite field is said to be \emph{globally $F$-regular} if it admits an ample line bundle $\mathscr L$ such that the section ring $\bigoplus_{n\geq 0} H^0(X, \mathscr L^n)$ is strongly $F$-regular as a ring. This definition is due to Smith in \cite{SmithGlobalFReg}. There, she shows that the section ring of \emph{any} ample line bundle on a globally $F$-regular variety is strongly $F$-regular. Further, if $X$ is a globally $F$-regular variety, then any affine subscheme of $X$ is strongly $F$-regular (as a ring), though the converse can fail. 

Next, we define what it means for a projective variety to be diagonally $F$-split. Let $Y$ be a scheme of characteristic $p$. We define the \emph{absolute Frobenius morphism} on $Y$ to be the map $F\colon Y \to Y$ which is the identity on topological spaces and for which  the map on structure sheaves is given by
\[
    F^\#(U)\colon \mathscr O_Y(U) \lra F_* \mathscr O_Y(U), \quad x \longmapsto x^p
\]
for all open $U \subseteq Y$. We say that $Y$ is $F$-split if there is a map $\vp\colon F_* \mathscr O_Y \to \mathscr O_Y$ which splits  $F^\#$. Given any closed subscheme $Z \subseteq Y$ with ideal sheaf $\mathscr I_Z$, we say $Y$ is $F$-split \emph{compatibly with $Z$} if there exists such a splitting which further satisfies $\vp(F_* \mathscr I_Z) \subseteq \mathscr I_Z$. 

If $X$ is a projective variety over a field $\kay$ of characteristic $p$, then the diagonal subscheme $\Delta_{X/\kay} \subseteq X \times_\kay X$ is closed. In the literature, such a variety $X$ is defined to be \emph{diagonally $F$-split} if $X \times_\kay X$ is $F$-split compatibly with $\Delta_{X/\kay}$. More generally, if $Y$ is any separated scheme over a base scheme $S$ of characteristic $p$, we can define $Y$ to be \emph{diagonally $F$-split over $S$} if $Y\times_S Y$ is $F$-split compatibly with the diagonal subscheme $\Delta_{Y/S} \subseteq Y \times_S Y$.  

\begin{lemma}
  Suppose $X$ is a diagonally $F$-split $S$-scheme and $U$ is an open subscheme of\, $X$. Then $U$ is diagonally $F$-split over $S$. 
  \label{lemma:openDiagFSplit}
\end{lemma}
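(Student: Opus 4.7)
The plan is to restrict the given splitting from $X \times_S X$ to the open subscheme $U \times_S U$ and check that the restriction retains both the splitting property and the compatibility with the diagonal. The whole argument is essentially formal, built on the observation that all the relevant structures (fibre products, diagonals, ideal sheaves, and sheaf-theoretic $F$-splittings) behave well with respect to restriction to opens.

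First, I would note that since open immersions are stable under base change, $U\times_S U$ is canonically an open subscheme of $X \times_S X$. The next key point is the standard compatibility
\[
  \Delta_{U/S} = \Delta_{X/S} \times_{X\times_S X} (U \times_S U),
\]
so that $\mathscr{I}_{\Delta_{U/S}}$ is the restriction of $\mathscr{I}_{\Delta_{X/S}}$ to $U \times_S U$. This can be checked affine locally: if $\Spec A$ is open in $S$, $\Spec B \subseteq X$ lies over $\Spec A$, and $\Spec B[f^{-1}] \subseteq U$, then the multiplication map $B[f^{-1}]\otimes_A B[f^{-1}] \to B[f^{-1}]$ is the localization of $B \otimes_A B \to B$ at the image of $f \otimes 1$, so its kernel is the localization of the kernel of the original map.

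Second, I would observe that an $F$-splitting restricts to any open subscheme. Given a morphism $\varphi\colon F_* \mathscr{O}_Y \to \mathscr{O}_Y$ of sheaves of $\mathscr{O}_Y$-modules and an open $V \subseteq Y$, the restriction $\varphi|_V\colon F_* \mathscr{O}_V \to \mathscr{O}_V$ makes sense because $F$ is the identity on topological spaces, so $(F_*\mathscr{O}_Y)|_V = F_* \mathscr{O}_V$. Since the formation of $F^{\#}$ commutes with restriction and the identity restricts to the identity, $\varphi|_V$ splits $F^{\#}_V$ whenever $\varphi$ splits $F^{\#}_Y$.

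Finally, applying these two observations, the given splitting $\varphi$ of $F^{\#}$ on $X \times_S X$ restricts to a splitting $\varphi|_{U\times_S U}$ of $F^{\#}$ on $U \times_S U$. Compatibility is preserved: $\varphi(F_* \mathscr{I}_{\Delta_{X/S}}) \subseteq \mathscr{I}_{\Delta_{X/S}}$ restricts to $\varphi|_{U\times_S U}(F_* \mathscr{I}_{\Delta_{U/S}}) \subseteq \mathscr{I}_{\Delta_{U/S}}$, by the identification from the first step. This exhibits $U$ as diagonally $F$-split over $S$. I do not anticipate any serious obstacle; the only mild subtlety is verifying that the diagonal commutes with restriction to opens, which is routine.
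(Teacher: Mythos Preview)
Your proposal is correct and follows essentially the same approach as the paper: identify $\Delta_{U/S}$ with $\Delta_{X/S}\cap (U\times_S U)$, then use that a compatible $F$-splitting restricts to any open subscheme. The only difference is that the paper cites \cite[Lemma~1.1.7]{BKFsplitting} for the restriction step, whereas you spell out the argument directly.
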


\begin{proof}
  The diagonal $\Delta_{U/S} \subseteq U\times_S U$ is the same as $\Delta_{X/S} \cap(U \times_S U)$. 
  Since $X \times_S X$ is $F$-split compatibly with $\Delta_{X/S}$, it follows that $U \times_S U$ is $F$-split compatibly with $\Delta_{U/S}$, by \cite[Lemma 1.1.7]{BKFsplitting}.
\end{proof}

\subsection{Schubert varieties} 
Let $G$ be a connected and simply connected semisimple linear algebraic group over a field $\kay$. Let $B$ be a Borel subgroup of $G$ and $P  \supseteq B$ a parabolic subgroup of $G$. Then $B$ acts algebraically on $G/P$ on the left with finitely many orbits.  The closure of any one of these orbits is called a Schubert subvariety of $G/P$.  If $\kay$ has positive characteristic, then any  Schubert subvariety $X \subseteq G/P$  is globally $F$-regular; see \cite[Theorem 2.2]{GloballyFRegSchubert}. If $\kay$ is further algebraically closed, then any Schubert subvariety $X \subseteq G/P$ is diagonally $F$-split; see \cite[Theorem 3.5]{RamanathanDiagFSplit}. 

If we take $G$ to be  $\GL_n(\kay)$, $B$ to be the subgroup of upper-triangular matrices, and $P$ to be the subgroup of $G$ given by
\[
  P = \left\{ (g_{ij}) \in \GL_n(\kay) \mid g_{ij} = 0 \textrm{ whenever } i > d \textrm{ and } j \leq d \right\}
\]
for some fixed $d < n$, then $G/P$ is $\Gr_d(\kay^n)$,
the Grassmannian of $d$-dimensional subspaces of $\kay^n$. The Schubert subvarieties of Grassmannians are the so-called ``classical'' Schubert varieties. The rings described in~\Cref{thm:detlDiagFSplit} are open subschemes of Schubert subvarieties of $\Gr_m(\kay^{m+n})$; see \cite[Theorem 5.5]{BrunsVetter}. For the reader's convenience, we will sketch a proof of this beautiful fact in the remainder of this subsection. A more detailed treatment is found in \cite[Chapters~4 and 5]{BrunsVetter}. 

Let $\mathscr M \subseteq \bA_\kay^{m(m+n)}$ be the set of $m\times (m+n)$ matrices with full rank. Then every $m$-dimensional subspace of $\kay^{m+n}$ can be expressed as the row-span of some $M \in \mathscr M$. Note that two different matrices have the same row-span if and only if one can be obtained from the other by a sequence of elementary row operations.  

Given any list of integers $1 \leq b_1 < \cdots < b_m \leq n+m$, let $\delta_{b_1, \ldots, b_m}\colon \mathscr M \to \kay$ be the function sending a matrix $M$ to the determinant of the submatrix of $M$ obtained by taking columns $b_1, b_2, \ldots, b_m$. Note that this is a maximal minor of $M$, and there are $\binom{m+n}{m}$ possible choices of such maximal minors. Set $N = \binom{m+n}{m}$ and consider the map $\rho\colon \mathscr M \to \bP^{N-1}$ sending each matrix to a list of its maximal minors,
\[
  \rho(M) = ( \delta_{1, 2,  \ldots,  m}(M): \dots: \delta_{n+1, n+2, \ldots, n+m}(M)),
  \]
in some fixed order. Note that one of these minors must be nonzero, as $M$ has maximal rank. Further, one checks that two matrices $M, N \in \mathscr M$ have the same row-span precisely when $\rho(M) = \rho(N)$. Thus we can identify the Grassmannian $\Gr_m(\kay^{m+n})$ with the image of $\rho$.  This is known as the \emph{Pl\"ucker embedding}. 

We get a partial ordering $\preceq$ on the set of minors $\delta_{b_1,\ldots, b_m}$ by declaring 
\[
  \delta_{b_1, \ldots, b_m} \preceq \delta_{c_1, \ldots, c_m} \Longleftrightarrow b_i \leq c_i \textrm{ for } i = 1, \ldots, m.
\]
In this way we also have a partial ordering on the coordinates of $\bP^{N-1}$. Then the Schubert subvarieties of $\Gr_m\left( \kay^{m+n} \right)$ are the subvarieties cut out by certain subsets of these coordinates of $\bP^{N-1}$, namely the subsets which form a ``poset ideal cogenerated by a single element'' under this partial ordering; see \cite[Theorem 5.4]{BrunsVetter}. 

It turns out that $\Gr_m(\kay^{m+n})$ intersected with any standard open affine $D(x_i)\subseteq \bP^{N-1}_\kay$ is a copy of $mn$-dimensional affine space. For instance, the inverse image of $D(x_N) \cap \Gr_m(\kay^{m+n})$ under $\rho$ is the set of matrices in $\mathscr M$ where the determinant of the $m$ rightmost columns is nonzero. Up to row operations, any such matrix can be uniquely written in the form
\begin{equation} \label{eq:bigmatrix}
  \begin{bmatrix}
    \alpha_{11} &\cdots & \alpha_{1n} & 0 & 0 & \cdots & 1\\
    \vdots      &       & \vdots      & \vdots & \vdots &  & \vdots\\
    \alpha_{(m-1)1} &\cdots & \alpha_{(m-1)n} & 0 & 1 & \cdots & 0\\
    \alpha_{m1} &\cdots & \alpha_{mn} & 1 & 0 & \cdots & 0\\
  \end{bmatrix},
\end{equation}
and conversely any choice of $\alpha_{ij}$ gives an element of $\rho\invrs\left( D(x_N) \right)$. 

Now, the key observation is that \emph{every} minor of the $m \times n$ matrix $\left( \alpha_{ij} \right)$ equals a maximal minor of the full $m \times (n+m)$ matrix in Equation \eqref{eq:bigmatrix}. This means that for any collection of minors $\delta_1, \ldots, \delta_r \in \kay[X_{m\times n}]$, one can find a corresponding collection of coordinates $x_{i_1}, \ldots, x_{i_r}$ on $\bP^{N-1}$ so that
\[
  \Spec\left( \kay\left[ X_{m\times n} \right]/(\delta_{1}, \ldots, \delta_r) \right) = V(x_{i_1}, \ldots, x_{i_r}) \cap \Gr_m(\kay^{m+n}) \cap D(x_N). 
\]
All that is left to check is that the coordinates vanishing along the varieties in~\Cref{thm:detlDiagFSplit} always form a poset ideal cogenerated by a single element, which is done in  \cite[Theorem 5.3]{BrunsVetter}. 

\subsection{A base-change lemma for diagonal \texorpdfstring{$F$}{F}-splitting}
The above work proves~\Cref{thm:detlDiagFSplit} whenever $\kay$ is algebraically closed.  For the general case, we just need to show that the $F$-splitting of $\sD 2(R/\kay)$ can be checked after base change to an algebraically closed field. We prove a more general fact, which we hope might be of independent use. 

\begin{proposition}
  Let $A$ be an $F$-finite field, and let $R$ be an $A$ algebra essentially of finite type. Suppose $B$ is a perfect field extension of $A$, and let $I \subseteq R$ be an ideal. Set $R_B \coloneqq R \otimes_A B$ and $I_B \coloneqq I\otimes_A B = IR_B$. If $\restrictedCartAlgSpecifyRFull{R_B}{I_B}$ is $F$-split, then $\restrictedCartAlgFull$ is $F$-split. If $A$ is perfect, then the converse holds as well. 
  \label{lemma:baseChange}
\end{proposition}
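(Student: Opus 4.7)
My plan is to handle the two implications separately. The forward direction is a simple descent argument: choose a retraction $R_B \to R$ coming from an $A$-basis of $B$ that contains $1$, and use it to push an $F$-splitting of $R_B/I_B$ down to an $F$-splitting of $R/I$. Notably, this direction does not actually require $B$ to be perfect. The converse is a base-change argument that relies on the isomorphism $F^e_* R \otimes_A B \cong F^e_* R_B$, which becomes available when both $A$ and $B$ are perfect.

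For the forward direction, I would first pick an $A$-basis $\{e_\lambda\}_{\lambda \in \Lambda}$ of $B$ with $e_0 = 1$; such a basis exists because $A \subseteq B$ is a field extension. Tensoring the decomposition $B = \bigoplus_\lambda A \cdot e_\lambda$ with $R$ (respectively, $I$) over $A$ yields $R$-module decompositions $R_B = \bigoplus_\lambda R \cdot e_\lambda$ and $I_B = \bigoplus_\lambda I \cdot e_\lambda$. Projection onto the $e_0$-summand defines an $R$-linear retraction $\pi_0 \colon R_B \to R$ of the inclusion $R \hookrightarrow R_B$ satisfying $\pi_0(I_B) \subseteq I$ and $\pi_0(1) = 1$. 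By hypothesis, we may choose $\widehat\psi \in \algCompWithIdeal{R_B}{I_B}{e}$ whose image in $R_B/I_B$ is an $F$-splitting, equivalently $\widehat\psi(F^e_* I_B) \subseteq I_B$ and $\widehat\psi(F^e_* 1) \in 1 + I_B$. Letting $\iota \colon F^e_* R \hookrightarrow F^e_* R_B$ be the natural inclusion, I set $\vp := \pi_0 \circ \widehat\psi \circ \iota \colon F^e_* R \to R$. A direct check confirms that $\vp$ is $R$-linear, $\vp(F^e_* I) \subseteq I$, and $\vp(F^e_* 1) \in 1 + I$, so $\overline\vp \in \restrictedCartAlg{e}$ is an $F$-splitting of $R/I$ and $\restrictedCartAlgFull$ is $F$-split.

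For the converse, assume additionally that $A$ is perfect. Then by the discussion of $\Theta$ in Section~2, the natural surjection $\Theta \colon F^e_* R \otimes_A F^e_* B \twoheadrightarrow F^e_* R_B$ is an isomorphism. Moreover, since $B$ is perfect, the map $F^e_* b \mapsto b^{1/p^e}$ is a well-defined $A$-linear isomorphism $F^e_* B \cong B$; composing yields an $R_B$-linear identification $F^e_* R \otimes_A B \cong F^e_* R_B$. Given an $F$-splitting $\overline\vp \in \restrictedCartAlg{e}$ with lift $\vp \in \algCompWithI{e}$ satisfying $\vp(F^e_* 1) \in 1 + I$, the map $\vp \otimes_A \mathrm{id}_B \colon F^e_* R \otimes_A B \to R_B$ is $R_B$-linear and transports across the identification to an $R_B$-linear map $\vp_B \colon F^e_* R_B \to R_B$ with $\vp_B(F^e_* I_B) \subseteq I_B$ and $\vp_B(F^e_* 1) \in 1 + I_B$. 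Thus $\overline{\vp_B}$ is the required $F$-splitting in $\restrictedCartAlgSpecifyR{R_B}{I_B}{e}$.

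The main technical subtlety lies in correctly matching up the $R_B$-module structures on $F^e_* R \otimes_A B$ and on $F^e_* R_B$ for the converse direction, since Frobenius pushforward interacts nontrivially with tensor product over a non-perfect base; perfectness of $A$ is exactly what is needed to circumvent this, via the isomorphism $\Theta$. The forward direction, by contrast, is essentially routine, and in fact uses neither perfectness of $A$ nor of $B$.
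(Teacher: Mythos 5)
Your proof is correct, and both directions take a genuinely different route from the paper's. For the forward direction, the paper works entirely through the ``evaluation-at-$1$'' reformulation: it identifies $F$-splitness of $\restrictedCartAlgFull$ with surjectivity of $\eval_1$ on $\image((F^e_*\pi)^*)\cap\image(\pi_*)$, base-changes this module along the faithfully flat map $R\to R_B$, and then produces a surjection in the base-changed intersection using the map $\widehat\alpha\colon F^e_*R\otimes_A B\to F^e_*R_B$, whose \emph{surjectivity} requires $B$ perfect. Your argument instead builds a retraction $\pi_0\colon R_B\to R$ from an $A$-basis of $B$ containing $1$ and simply restricts/projects the lifted splitting, i.e.\ $\vp=\pi_0\circ\widehat\psi\circ\iota$. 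This is more elementary, avoids faithful flatness entirely, and --- as you correctly point out --- proves a strictly stronger statement: the forward implication holds for any field extension $B/A$ (indeed any free $A$-algebra with $1$ in a basis), with no perfectness hypothesis on $B$. For the converse both arguments rest on the same essential point, that $\widehat\alpha$ is an isomorphism when $A$ and $B$ are both perfect; but where the paper again routes through the $\eval_1$ framework and faithful flatness before inverting $\alpha$, you simply form $\vp\otimes_A\mathrm{id}_B$ and transport along $\widehat\alpha^{-1}$, which is shorter. What the paper's approach buys is a single uniform mechanism handling both implications and sitting naturally inside the Hom-base-change formalism; what yours buys is concision and the improved forward statement. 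One small presentational note: the $\Theta$ discussed in Section~2 is stated for $R$-modules $M,N$, whereas you apply it with $N=B$ an $A$-algebra that is not an $R$-module; the identity and the ``$A$ perfect $\Rightarrow\Theta$ iso'' reasoning go through verbatim for arbitrary $A$-modules, but it would be worth flagging that you are using this slightly broader version.
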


\begin{proof} Suppose that $\restrictedCartAlgSpecifyRFull{R_B}{I_B}$ is $F$-split. We wish to find a surjective map $\psi \in \homgp_R(F^e_* (R/I), R/I)$ with a lift to $R$. Let $\pi$ denote the map $\pi\colon R \surj R/I$, so that we have $F^e_* \pi\colon F^e_*R \surj F^e_* (R/I)$.  Let $(F^e_* \pi)^*$ be the functor $\homgp_R(-, R/I)$ applied to $F^e_* \pi$, and let $\pi_*$ be the functor $\homgp_R(F^e_* R, -)$ applied to $\pi$. Observe that a map $\psi \in \homgp_R( F^e_* (R/I), R/I)$ has a lift to $R$ if and only if $(F^e_* \pi)^*(\psi) = \psi \circ F^e_* \pi$ is in the image of~$\pi_*$. Further, $(F^e_* \pi)^*(\psi)$ is surjective precisely when $\psi$ is.  Thus we wish to show that  $\image( (F^e_*\pi)^*) \cap \image(\pi_{*})$ contains a surjection, or  equivalently that the ``evaluation at 1'' map is surjective. To emphasize, this is the key point: $\restrictedCartAlgFull$ is $F$-split precisely when the map
    \[
       \eval_1\colon  \image( (F^e_*\pi)^*) \cap \image(\pi_{*}) \lra R/I, \quad \vp \longmapsto \vp(F^e_* 1)
    \]
    is surjective. 

    Because $R_B$ is faithfully flat over $R$, the map $\eval_1$ is surjective precisely when $\eval_1 \otimes_R R_B$ is, which is equivalent to $\left( \image( (F^e_*\pi)^*) \cap \image(\pi_{*})\right) \otimes_R R_B$ containing a surjection. This same faithful flatness yields
    \begin{align*}
    \left( \image( (F^e_*\pi)^*) \cap \image(\pi_{*})\right) \otimes_R R_B  &= \image( (F^e_*\pi)^*) \otimes_R R_B  \cap (\image \pi_*) \otimes_R R_B\\
    &= \image( (F^e_*\pi)^* \otimes_R R_B) \cap \image (\pi_* \otimes_R R_B)\\
    &= \image( ((F^e_* \pi) \otimes_A B)^*) \cap \image (\pi \otimes_A B)_*,
  \end{align*}
    where $((F^e_* \pi) \otimes_A B)^*$ is the functor $\homgp_{R_B}(-, R_B/I_B)$ applied to $(F^e_* \pi) \otimes_A B$, 
    and  $(\pi \otimes_A B)_*$ is  the functor $\homgp_{R_B}(F^e_* R \otimes_A B, -)$ applied to $\pi \otimes_A B$. 

    By assumption, there exists a surjection $\psi\colon F^e_* (R_B/I_B) \to R_B/I_B$ which lifts to $R_B$:
  \[
    \xymatrix{
      F^e_* R_B \ar[r]^{\widehat \psi} \ar[d]_{F^e_*(\pi\otimes_A B)}  &  R_B \ar[d]^{\pi\otimes_A B} \\
      F^e_* (R_B/I_B) \ar[r]^{\psi}  & R_B/I_B\rlap{.}
    } 
  \]
  Observe that we have a natural $R_B$-linear map $\widehat \alpha\colon F^e_* R\otimes_A B \to F^e_* R_B$ given by $F^e_* r \otimes b \mapsto F^e_* (r \otimes b^{p^e})$---this is just the composition of the natural maps $F^e_* R \otimes_A B \to F^e_* R \otimes_A F^e_*B$ and $F^e_* R \otimes_A F^e_*B \to F^e_*(R\otimes_A B)$ described in Section 2.  Similarly, we have a natural map $\alpha\colon F^e_* (R/I) \otimes_A B \to F^e_* (R_B/I_B)$. Because $B$ is perfect, both $\alpha$ and $\hat \alpha$ are surjective. These maps fit into a larger diagram
  \[
    \xymatrix{
      F^e_* R \otimes_A B\ar[r]^{\widehat\alpha} \ar[d]_{(F^e_* \pi) \otimes_A B} & F^e_* R_B \ar[r]^{\widehat \psi} \ar[d]_{F^e_*(\pi\otimes_A B)}  &  R_B \ar[d]^{\pi\otimes_A B} \\
      F^e_* (R/I) \otimes_A B \ar[r]^{\alpha} & 
      F^e_* (R_B/I_B) \ar[r]^{\psi}  & R_B/I_B\rlap{.}
    } 
  \]
  Thus $\psi \circ \alpha \circ ((F^e_* \pi) \otimes_A B)$ is a surjection in $\image((F^e_* \pi) \otimes_A B)^*\cap \image (\pi \otimes_A B)_*$, as desired. 

  For the converse, suppose that $A$ is perfect and $\restrictedCartAlgFull$ is $F$-split. By the above argument, we have a surjection in $\psi \in \image((F^e_* \pi) \otimes_A B)^*\cap \image (\pi \otimes_A B)_*$. Since $B$ is perfect, the Frobenius map $B \to F^e_* B$ is an isomorphism. Combining this with the assumption that $A$ is perfect, we see that the maps $\alpha$ and $\widehat \alpha$ are isomorphisms. Setting $\psi = \chi \circ (F^e_* \pi \otimes_A B)$, we see that $\chi \circ \alpha\invrs$ is an $F$-splitting in $\restrictedCartAlgSpecifyRFull{R_B}{I_B}$.
 \end{proof}

\begin{corollary}
  \label{cor:diagFSplitBaseChg}
  Let $A$ be an $F$-finite field, and let $R$ be an $A$ algebra essentially of finite type. Suppose $B$ is a perfect field extension of $A$. If $\sD 2 (R_B/B)$ is $F$-split, then $\sD 2(R/A)$ is $F$-split.
\end{corollary}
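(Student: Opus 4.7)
The plan is to reduce this directly to \Cref{lemma:baseChange} applied to the ring $S := R \otimes_A R$ and the ideal $I := \ker \mu_A$. Since $R$ is essentially of finite type over the $F$-finite field $A$, so is $S$, placing us squarely in the hypothesis of that proposition.

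First I would set up two natural identifications. The first is the canonical $B$-algebra isomorphism
\[
  R_B \otimes_B R_B \;=\; (R \otimes_A B) \otimes_B (R \otimes_A B) \;\xrightarrow{\sim}\; (R \otimes_A R) \otimes_A B \;=\; S_B,
\]
sending $(r_1 \otimes_A b_1) \otimes_B (r_2 \otimes_A b_2)$ to $r_1 \otimes_A r_2 \otimes_A b_1 b_2$ (using $B \otimes_B B = B$). The second observation is that, under this identification, the multiplication map $\mu_B \colon R_B \otimes_B R_B \to R_B$ becomes $\mu_A \otimes_A \mathrm{id}_B$. Since $B$ is a field extension of $A$, it is flat over $A$, so tensoring with $B$ preserves kernels and
\[
  \ker \mu_B \;=\; (\ker \mu_A) \otimes_A B \;=\; I_B,
\]
in the notation of \Cref{lemma:baseChange}.

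With these identifications in hand, by definition we have
\[
  \sD^2(R/A) \;=\; \restrictedCartAlgSpecifyRFull{S}{I} \qquad \text{and} \qquad \sD^2(R_B/B) \;=\; \restrictedCartAlgSpecifyRFull{S_B}{I_B}.
\]
The hypothesis that $\sD^2(R_B/B)$ is $F$-split thus translates verbatim into the hypothesis of \Cref{lemma:baseChange} for $(S, I)$, and invoking that proposition yields that $\sD^2(R/A)$ is $F$-split, as desired.

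The only real content to be careful about is the compatibility of the two identifications with the Frobenius twists $F^e_*$ used inside the definition of the restricted Cartier algebra; but this is exactly the kind of naturality that is already implicit in the statement and proof of \Cref{lemma:baseChange} (via the maps $\alpha$ and $\widehat\alpha$ there), and I do not expect any subtlety beyond keeping track of where $p^e$-th powers of elements of $B$ appear. In particular, no new $F$-splitting argument is needed: the main theorem's work has been done in \Cref{lemma:baseChange}, and this corollary is purely a matter of correctly matching up the two descriptions of the diagonal and its vanishing ideal.
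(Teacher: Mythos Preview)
Your proof is correct and follows essentially the same route as the paper: identify $R_B \otimes_B R_B$ with $(R\otimes_A R)\otimes_A B$, check that $\mu_B = \mu_A \otimes_A B$ so that $\ker\mu_B = (\ker\mu_A)\otimes_A B$ by flatness, and then invoke \Cref{lemma:baseChange}. The paper's argument is just a terser version of yours.
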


\begin{proof}
  By definition, the Cartier algebra $\restrictedCartAlgSpecifyRFull{R_B \otimes_B R_B}{\ker \mu_B}$ is $F$-split, where $\mu_B$ is the multiplication map $\mu_B \colon R_B \otimes_B R_B \to R_B$. But $R_B \otimes_B R_B = R\otimes_A R \otimes_A B$, and we can obtain $\mu_B$ by applying the functor $- \otimes_A B$ to the multiplication map $\mu_A\colon R\otimes_A R \to R$. By faithful flatness, we see that $\ker \mu_B = (\ker \mu_A) \otimes_A B$. Then the result follows from~\Cref{lemma:baseChange}. 
\end{proof}

\newcommand{\etalchar}[1]{$^{#1}$}
\providecommand{\bysame}{\leavevmode\hbox to3em{\hrulefill}\thinspace}
\providecommand{\MR}{\relax\ifhmode\unskip\space\fi MR }
\providecommand{\MRhref}[2]{%
  \href{http://www.ams.org/mathscinet-getitem?mr=#1}{#2}
}
\providecommand{\href}[2]{#2}

\end{document}